\tikzset{
    Solid/.style={circle,draw,inner sep=1.5,fill=black},
    level distance=.6cm,
    sibling distance=.6cm,
    font=\footnotesize
}
\newtheorem{thm}{Theorem}[section]
\newtheorem{cor}[thm]{Corollary}
\newtheorem{lemma}[thm]{Lemma}
\newtheorem{prop}[thm]{Proposition}
\theoremstyle{definition}
\newtheorem{remark}[thm]{Remark}
\numberwithin{equation}{section}
\newcommand{\N}{\mathbb N}
\newcommand{\C}{\mathbb C}
\newcommand{\Z}{\mathbb Z}
\newcommand{\al}{\alpha}
\newcommand{\be}{\beta}
\newcommand{\ga}{\gamma}
\newcommand{\de}{\delta}
\newcommand{\De}{\Delta}
\newcommand{\si}{\sigma}
\newcommand{\la}{\lambda}
\newcommand{\La}{\Lambda}
\newcommand{\bn}{\mathbf{n}}
\newcommand{\br}{\mathbf{r}}
\newcommand{\bs}{\mathbf{s}}
\newcommand{\bt}{\mathbf{t}}
\newcommand{\bp}{\mathbf{p}}
\newcommand{\bv}{\mathbf{v}}
\newcommand{\bnu}{\boldsymbol{\nu}}
\newcommand{\tensor}{\otimes}
\newcommand{\rphis}[5]{\,_{#1}\varphi_{#2}\!\left( \genfrac{.}{.}{0pt}{}{#3}{#4}
\,;#5 \right)}
\newcommand{\mhyphen}{\text{--}}
\begin{document}
\title[$3nj$-symbols and $q$-Bessel functions]{$3nj$-symbols and identities for $q$-Bessel functions}
\author{Wolter Groenevelt}
\address{Technische Universiteit Delft, DIAM, PO Box 5031,
2600 GA Delft, the Netherlands}
\email{w.g.m.groenevelt@tudelft.nl}

\maketitle

\begin{abstract}
The $6j$-symbols for representations of the $q$-deformed algebra of polynomials on $\mathrm{SU}(2)$ are given by Jackson's third $q$-Bessel functions. This interpretation leads to several summation identities for the $q$-Bessel functions. Multivariate $q$-Bessel functions are defined, which are shown to be limit cases of multivariate Askey-Wilson polynomials. The multivariate $q$-Bessel functions occur as $3nj$-symbols.
\end{abstract}

\section{Introduction}
It is well known that Wigner's $6j$-symbols for the $\mathrm{SU}(2)$ group are multiples of hypergeometric orthogonal polynomials called the Racah polynomials. Similarly, $6j$-symbols for the $\mathrm{SU}(2)$ quantum group can be expressed in terms of $q$-Racah polynomials, which are $q$-hypergeometric orthogonal polynomials. With this interpretation, properties of $6j$-symbols such as summation formulas and orthogonality relations, lead to properties of specific families of orthogonal polynomials.

In this paper we consider $6j$-symbols for representations of the the $q$-deformed algebra of polynomials on $\mathrm{SU}(2)$. This algebra has as irreducible representations the trivial one, and a family of infinite dimensional representations which dissappear in the classical limit. The $6j$-symbols for tensor products of the three infinite dimensional representations can be expressed in terms of Jackson's third $q$-Bessel functions \cite{Groen14}. Note that, different from the classical $6j$-symbols, these are not polynomials. We consider three fundamental identities for $6j$-symbols (see e.g.~\cite{CFS}): Racah's backcoupling identity, the Biedenharn-Elliott identity and the hexagon identity. These identities are obtained by decomposing 3- or 4-fold tensor product representations in several ways. To keep track of the order of decomposing the representations, it is convenient to identify certain vectors in the representations spaces with binary trees. Then the $6j$-symbols can be considered as coupling coefficients between two of these trees. The identities we obtain can be interpreted as summation identities for $q$-Bessel functions. We remark that the hexagon identity implies that the $q$-Bessel functions are matrix elements of an infinite dimensional solution of the quantum Yang-Baxter equation (or, the star-triangle equation in IRF-models), see e.g.~\cite{Ji}, which should be of independent interest.

We also consider specific $3nj$-symbols, which may naturally be considered as multivariate $q$-Bessel functions. The one variable $q$-Bessel functions fit into an extended Askey-scheme \cite{KoeSt} of orthogonal $q$-hypergeometric functions; the original ($q$-)Askey-scheme \cite{KLS} consists of ($q$-)hypergeometric orthogonal polynomials. We will show that the multivariate $q$-Bessel functions fit into an extended Askey-scheme of multivariate orthogonal functions of $q$-hypergeometric type, by showing that the multivariate $q$-Bessel functions can be obtained as limits of the multivariate Askey-Wilson polynomials defined by Gasper and Rahman \cite{GR05}, which are the $q$-analogs of Tratnik's multivariate Wilson polynomials \cite{Tr}. The multivariate Askey-Wilson polynomials can be thought of as being on top of a scheme of multivariate orthogonal polynomials; several limit cases are considered in \cite{GR05}, \cite{GR07}, \cite{Il}. Geronimo and Illiev \cite{GeIl} obtained multivariate Askey-Wilson functions generalizing the multivariate Askey-Wilson functions, which should be on top of the extended Askey-scheme. Several families of orthogonal polynomials in this scheme and its $q=1$ analog are connected to tensor product representations and binary coupling schemes, see e.g.~Van der Jeugt \cite{Jeugt}, Rosengren \cite{Ro} and Scarabotti \cite{Sc}.

This paper is organized as follows. In Section \ref{sec:quantumSU(2)} the quantum algebra $\mathcal A_q(\mathrm{SU}(2))$ and its representation theory is recalled. In section \ref{sec:6j-symbols} it is shown that the $6j$-symbols are essentially $q$-Bessel functions, using a generating function for $q$-Bessel functions. Using binary trees we obtain the fundamental identities for $6j$-symbols, leading to summation formulas for the $q$-Bessel functions. In Section \ref{sec:3nj-symbols} we first define multivariate $q$-Bessel functions as nontrivial products of $q$-Bessel functions, and we prove orthogonality relations. Then we show that these multivariate $q$-Bessel functions occur as $3nj$-symbols, and use this interpretation to find a summation formula.\\

\textit{Notations.} We use $\N = \{0,1,2,\ldots\}$ and we use standard notation for $q$-hypergeometric functions as in \cite{GR}.

\section{The quantum algebra $\mathcal A_q(\mathrm{SU}(2))$} \label{sec:quantumSU(2)}
Let $q \in (0,1)$. The $q$-deformed algebra of polynomials on ${\mathrm{SU}(2)}$ is the complex unital associative algebra $\mathcal A_q=\mathcal A_q(\mathrm{SU}(2))$  generated by $\al$, $\be$, $\ga$, $\de$, which satisfy the relations
\begin{equation} \label{eq:comm rel}
\begin{gathered}
\al \be = q \be \al, \quad \al \ga = q \ga \al, \quad \be \de = q \de \be, \quad \ga \de = q \de \ga, \\
\be \ga = \ga \be, \quad \al \de - q\be \ga = 1 = \de \al - q^{-1}\be \ga.
\end{gathered}
\end{equation}
$\mathcal A_q$ is a Hopf-$*$-algebra with $*$-structure and comultiplication $\De$ defined on the generators by
\begin{equation} \label{eq:*structure}
\al^* = \de, \quad \be^* = -q \ga, \quad \ga^* = -q^{-1} \be, \quad \de^* = \al,
\end{equation}
\begin{equation} \label{eq:coprod}
\begin{split}
\De(\al) = \al \otimes \al + \be \otimes \ga, \quad \De(\be) = \al \otimes \be + \be \otimes \de,\\
\De(\ga) = \ga \otimes \al + \de \otimes \ga, \quad \De(\de) = \de \otimes \de + \ga \otimes \be.
\end{split}
\end{equation}
The irreducible $*$-representation of $\mathcal A_q$ are either 1-dimensional, or infinite dimensional. The infinite dimensional irreducible $*$-representations are labeled by $\phi \in [0,2\pi)$, and we denote a representation by $\pi_\phi$. The representation space of $\pi_\phi$ is $\ell^2(\N)$. The generators $\al,\be,\ga,\de$ act on the standard orthonormal basis $\{e_n \mid n \in \N\}$ of $\ell^2(\N)$ by
\[
\begin{split}
\pi_\phi(\al)\, e_n &= \sqrt{1-q^{2n}}\, e_{n-1},\\
\pi_\phi(\be)\,e_n &= - e^{-i\phi} q^{n+1}\, e_n,\\
\pi_\phi(\ga)\, e_n &= e^{i\phi} q^n\, e_n,\\
\pi_\phi(\de)\, e_n &= \sqrt{1-q^{2n+2}} e_{n+1}.
\end{split}
\]
Note that $\pi_\phi(\ga\be)$ is a self-adjoint diagonal operator in the standard basis.

\begin{remark}
In this paper we consider tensor products of $\pi_0$. We could also consider the representation $\pi_{\phi_1} \tensor \pi_{\phi_2}$, but this would not lead to more general results in this paper, because representation labels only occur in phase factors, see \cite[\S II.A]{Groen14}.
\end{remark}

Let $\si: \ell^2(\N) \tensor \ell^2(\N) \to \ell^2(\N) \tensor \ell^2(\N)$ be the flip operator, the linear operator defined on pure tensors by $\si(v_1 \tensor v_2) = v_2 \tensor v_1$. We write
\[
\pi_{12} = (\pi_0 \tensor \pi_0) \De, \qquad \pi_{21}= \si \pi_{12} \si.
\]
For three-fold tensor product representations we write
\[
\pi_{1(23)} = (\pi_0 \tensor \pi_0 \tensor \pi_0)(1\tensor \De)(\De), \qquad \pi_{(12)3} = (\pi_0\tensor \pi_0 \tensor \pi_0)(\De\tensor 1)(\De).
\]
Since $\De$ is coassociative, we have $\pi_{1(23)} = \pi_{(12)3}$.

From \eqref{eq:coprod} one finds
\[
\De(\ga \ga^*) =q^{-1}\De(\ga\be)= -q^{-1}\Big( \ga \be \tensor \al \de+ \ga \al \tensor \al \be + \de \be \tensor \ga \de + \de \al \tensor \ga \be\Big).
\]
Using this, eigenvectors of $\pi_{12}(\ga \ga^*)$ can be computed (see \cite{Groen14} for details):
for $p \in \Z$ and $x \in \N$ define
\[
e_{x,p}^{12} =
\sum_{\substack{n,m \in \N\\n-m=p}} C_{x,m,n} \, e_m \tensor e_{n},
\]
where we assume $e_{-n}=0$ for $n \geq 1$,
then $\pi_{12}(\ga \ga^*) e_{x,p}^{12} = q^{2x} e_{x,p}^{12}$. The Clebsch-Gordan coefficients $C_{x,m,n}$ can be given explicitly in terms of Wall polynomials, see \cite{KLS}, which are defined by
\begin{equation} \label{eq:defWallpol}
\begin{split}
p_n(q^x;a;q) &= \rphis{2}{1}{ q^{-n}, 0 }{aq}{q, q^{x+1}}\\
& = \frac{ (-a)^n q^{\frac12n(n+1)}}{(aq;q)_n} \rphis{2}{0}{q^{-n},q^{-x}}{\mhyphen}{q,\frac{q^x}{a}},
\end{split}
\end{equation}
for $n,x \in \N$.
The second expression follows from applying transformation \cite[(III.8)]{GR} with $b \to 0$. Note that, for $x\in \N$, the $_2\varphi_0$-series can be considered as a polynomial in $q^{-n}$ of degree $x$. This polynomial is (proportional to) an Al-Salam--Carlitz II polynomial.

Let the function $\bar p_n(q^x;a;q)$ be defined by
\begin{equation} \label{eq:defbarpn}
\bar p_n(q^x;a;q)=(-1)^{n+x} \sqrt{ \frac{ (aq)^{x-n} (aq;q)_\infty (aq;q)_n }{ (q;q)_n (q;q)_x } }\, p_n(q^x;a;q),
\end{equation}
then from the orthogonality relation for the Wall polynomials and from completeness we obtain the orthogonality relations
\[
\sum_{x\in \N} \bar p_n(q^x;a;q) \bar p_m(q^x;a;q) = \de_{nm}, \qquad \sum_{n \in \N} \bar p_n(q^x;a;q) \bar p_n(q^y;a;q) = \de_{xy},
\]
for $0<a<q^{-1}$. The second relation corresponds to orthogonality relations for Al-Salam--Carlitz II polynomials. The coefficients $C_{x,m,n}$, $m,n \in \N$, are defined by
\[
C_{x,m,n} =
\begin{cases}
\bar p_n(q^{2x};q^{2(n-m)};q^2), & n \geq m,\\
\bar p_m(q^{2x};q^{2(m-n)};q^2) , & n \leq m,
\end{cases}
\]
and they satisfy
\begin{equation} \label{eq:c p<->-p}
C_{x,n,m} = C_{x,m,n},
\end{equation}
which follows from the explicit expression as a $_2\varphi_1$-function. Furthermore, we define $C_{x,m,n}=0$ for $m \in -\N_{\geq 1}$ or $n \in -\N_{\geq 1}$ or $x \in -\N_{\geq 1}$.

The set $\{e_{x,p}^{12} \mid p \in \Z, x \in \N \}$ is an orthonormal basis for $\ell^2(\N) \tensor \ell^2(\N)$. The actions of the $\mathcal A_q$-generators on this basis are given by
\begin{equation} \label{eq:actions1}
\begin{split}
\pi_{12}(\al)\, e_{x,p}^{12} &= \sqrt{ 1- q^{2x}}\, e_{x-1,p}^{12} ,\\
\pi_{12}(\be)\, e_{x,p}^{12} &= -q^{x+1}\, e_{x,p+1}^{12},\\
\pi_{12}(\ga)\, e_{x,p}^{12} &= q^x\,e_{x,p-1}^{12},\\
\pi_{12}(\de)\, e_{x,p}^{12} &= \sqrt{ 1- q^{2x+2}}\, e_{x+1,p}^{12},
\end{split}
\end{equation}
where $e_{-1,p}^{12}=0$. We can also find eigenvectors $e_{x,p}^{21}$ of $\pi_{21}(\ga \ga^*)$ for eigenvalue $q^{2x}$, $x \in \N$:
\[
e_{x,p}^{21} = \sum_{\substack{n,m \in \N\\m-n=p}} C_{x,m,n}\, e_m \tensor e_n = e^{12}_{x,-p}.
\]

\section{$6j$-symbols and $q$-Bessel functions} \label{sec:6j-symbols}
In \cite{Groen14} explicit expressions for the $6j$-symbols (and for more general coupling coefficients) have been found. It turns out that they are essentially $q$-Bessel functions. Here we derive these results again using a more direct approach, and use this interpretation of the $q$-Bessel functions to obtain summation identities.

\subsection{$6j$-symbols}
In the same way as above we can find eigenvectors of $\pi_{1(23)}(\ga\ga^*)$ and $\pi_{(12)3}(\ga\ga^*)$; for $x\in \N$, $p,r \in \Z$,
\begin{align*}
e_{x,p,r}^{1(23)} &= \sum_{n \in \N} C_{x,n,n+p} \ e_n \tensor e_{n+p,x-n-r}^{23} &\\
&= \sum_{n,m \in \N} C_{x,n,n+p} C_{n+p,m,k}\, e_n \tensor e_m \tensor e_k, &n-m+k=x-r,\\
e_{x,p,r}^{(12)3} &= \sum_{k \in \N} C_{x,k-p,k} \ e_{k-p,r-x+k}^{12} \tensor e_k & \\
&= \sum_{k,m \in \N} C_{x,k-p,k} C_{k-p,n,m}\, e_n \tensor e_m \tensor e_k,& \qquad n-m+k=x-r,
\end{align*}
are eigenvectors for eigenvalue $q^{2x}$, $x \in \N$. We use here the convention $e_{-n}=e_{-n,p} = 0$ for $n \in -\N_{\geq 1}$. The actions of the $\mathcal A_q$-generators $\al, \be, \ga, \de$ on the eigenvectors can be obtained in the same way as in \cite{Groen14}
\begin{align*}
\pi_{1(23)}(\al) e_{x,p,r}^{1(23)} &= \sqrt{ 1-q^{2x} }\, e_{x-1,p,r}^{1(23)},& \mathcal \pi_{(12)3}(\al) e_{x,p,r}^{(12)3} &= \sqrt{ 1-q^{2x} }\, e_{x-1,p,r}^{(12)3}, \\
\pi_{1(23)}(\be) e_{x,p,r}^{1(23)} &= -q^{x+1}\, e_{x,p+1,r}^{1(23)},& \mathcal \pi_{(12)3}(\be) e_{x,p,r}^{(12)3} &= -q^{x+1}\, e_{x,p+1,r}^{(12)3},\\
\pi_{1(23)}(\ga) e_{x,p,r}^{1(23)} &= q^x\, e_{x,p-1,r}^{1(23)},& \pi_{(12)3}(\ga) e_{x,p,r}^{(12)3} &= q^x\, e_{x,p-1,r}^{(12)3}\\
\pi_{1(23)}(\de) e_{x,p,r}^{1(23)} &= \sqrt{ 1-q^{2x+2} }\, e_{x+1,p,r}^{1(23)},& \pi_{(12)3}(\de) e_{x,p,r}^{(12)3} &= \sqrt{ 1-q^{2x+2} }\, e_{x+1,p,r}^{(12)3},
\end{align*}
where $e_{-1,p,r}=0$.
Note that this corresponds exactly to the actions on the eigenvectors $e_{x,p}$.\\

The $6j$-symbol (or Racah coefficient) $R_{p_1,r_1;p_2,r_2}^{x}$ is the (re)coupling coefficient between the two eigenvectors;
\[
R_{p_1,r_1;p_2,r_2}^{x} = \langle e_{x,p_1,r_1}^{1(23)}, e_{x,p_2,r_2}^{(12)3} \rangle,
\]
or equivalently
\begin{equation} \label{eq:e=sum Re}
e_{x,p_1,r_1}^{1(23)} = \sum_{p_2,r_2} R_{p_1,r_1;p_2,r_2}^x  e_{x,p_1,r_1}^{1(23)}.
\end{equation}
We start by looking at some simple properties of $R$.
\begin{prop} \label{prop:properties R}
The coefficients $R$ have the following properties:
\begin{enumerate}[(i)]
\item Orthogonality relations: $\displaystyle \sum_{p_1,r_1 \in \Z} R_{p_1,r_1;p_2,r_2}^{x} R_{p_1,r_1;p_3,r_3}^{x} = \de_{p_2,p_3} \de_{r_2,r_3}$.
\item $R_{p_1,r_1;p_2,r_2}^{x} = R_{p_1+k,r_1;p_2+k,r_2}^{x}$ for $k \in \Z$.
\item $R_{p_1,r_1;p_2,r_2}^{x} = R_{p_1,r_1;p_2,r_2}^{x+k}$ for $k \in \Z_{\geq -x}$.
\item For $k,m,n \in \N$,
\[
C_{x,n+p_1,n} C_{n+p_1,m,k} = \sum_{p_2 \in \Z_{\leq k}} R_{p_1,r;p_2,r}^x C_{x,k-p_2,k} C_{k-p_2,m,n}, \qquad x-r=n-m+k.
\]
\item Duality: $R_{p_1,r;p_2,r}^x = R_{-p_2,r;-p_1,r}^x$.
\end{enumerate}
\end{prop}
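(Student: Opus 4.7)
The plan is to reduce (i)--(iii) to the action formulas of the $\mathcal A_q$-generators on the two eigenvector bases, and to reduce (iv) and (v) to rearranging the explicit expansions of $e_{x,p,r}^{1(23)}$ and $e_{x,p,r}^{(12)3}$ in the standard basis $e_n \tensor e_m \tensor e_k$.

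Part (i) is just Parseval: for fixed $x$, both $\{e_{x,p,r}^{1(23)}\}_{p,r\in\Z}$ and $\{e_{x,p,r}^{(12)3}\}_{p,r\in\Z}$ are orthonormal bases of the $q^{2x}$-eigenspace of $\pi_{1(23)}(\ga\ga^*)=\pi_{(12)3}(\ga\ga^*)$, so expanding $e_{x,p_2,r_2}^{(12)3}$ in the first basis and taking inner products yields the claim. For (ii) I would use that $\pi_{1(23)}(\be)=\pi_{(12)3}(\be)$ shifts $p\mapsto p+1$ with the \emph{same} scalar $-q^{x+1}$ on both families of eigenvectors; iterating $\pi(\be)^k$ (and $\pi(\ga)^{|k|}$ for $k<0$) and taking inner products, the two scalar factors combine to $q^{2k(x+1)}$ on both sides of $R_{p_1,r_1;p_2,r_2}^{x} = R_{p_1+k,r_1;p_2+k,r_2}^{x}$ and cancel. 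Part (iii) is entirely analogous, using $\pi(\de)$ and $\pi(\al)$ to shift $x$; the restriction $k\in\Z_{\geq -x}$ is precisely the condition that $\pi(\al)^{|k|}$ does not annihilate the vector.

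For (iv) I would substitute \eqref{eq:e=sum Re} into the explicit expansion of $e_{x,p_1,r_1}^{1(23)}$ and equate coefficients of $e_n \tensor e_m \tensor e_k$. The left-hand coefficient is $C_{x,n,n+p_1}\,C_{n+p_1,m,k}$, which by \eqref{eq:c p<->-p} equals $C_{x,n+p_1,n}\,C_{n+p_1,m,k}$. On the right, only those terms in the expansion of $e_{x,p_2,r_2}^{(12)3}$ whose hidden constraint $n-m+k=x-r_2$ matches the left-hand constraint contribute, forcing $r_2=r_1=:r$; the coefficient then reads $C_{x,k-p_2,k}\,C_{k-p_2,n,m}=C_{x,k-p_2,k}\,C_{k-p_2,m,n}$. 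The convention $C_{a,b,c}=0$ for negative arguments restricts the sum to $p_2\leq k$.

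The mildly delicate one is (v). I would write both $R_{p_1,r;p_2,r}^{x}$ and $R_{-p_2,r;-p_1,r}^{x}$ as fourfold sums over triples $(n,m,k)$ with $n-m+k=x-r$, using the definitions of $e_{x,\cdot,r}^{1(23)}$ and $e_{x,\cdot,r}^{(12)3}$. In the sum for $R_{-p_2,r;-p_1,r}^{x}$ I would then perform the change of summation variable $n \leftrightarrow k$ (the constraint is symmetric in $n$ and $k$) and apply \eqref{eq:c p<->-p} to each of the four $C$-symbols appearing, whereupon the summand matches that of $R_{p_1,r;p_2,r}^{x}$ term by term. No step is genuinely hard; the obstacle is purely bookkeeping, namely keeping track of which pair of arguments of each $C$ is being transposed, since a careless application of \eqref{eq:c p<->-p} will produce mismatched indices and break the term-by-term identification.
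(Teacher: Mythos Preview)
Your proposal is correct and follows essentially the same route as the paper for (i)--(iv): unitarity for (i), the $*$-structure $\be^*=-q\ga$ and $\al^*=\de$ for (ii)--(iii), and comparing coefficients of $e_n\otimes e_m\otimes e_k$ in the expansion \eqref{eq:e=sum Re} for (iv).

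The only point of divergence is (v). The paper derives duality from (iv): applying (iv) with $(p_1,p_2,n,k)\mapsto(-p_2,-p_1,k,n)$ and using \eqref{eq:c p<->-p} produces the same expansion as the \emph{inverse} of (iv) (obtained from unitarity of $R$), and comparing coefficients yields $R_{p_1,r;p_2,r}=R_{-p_2,r;-p_1,r}$. Your approach instead writes each $R$ directly as a constrained sum over $(n,m,k)$ of a product of four Clebsch--Gordan coefficients and matches summands via $n\leftrightarrow k$ and \eqref{eq:c p<->-p}. This is more hands-on but avoids invoking the inverse transformation; the paper's route is shorter once (iv) is in place. Both arguments ultimately rest on the same symmetry \eqref{eq:c p<->-p} and the $n\leftrightarrow k$ symmetry of the constraint $n-m+k=x-r$. (Minor quibble: your ``fourfold sums'' are really two-parameter sums with four-factor summands.)
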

Note that identity (iii) implies that $R$ is independent of $x$, therefore we will omit the superscript `$x$'.
\begin{proof}
The coefficients $R$ are matrix coefficients of a unitary operator, which leads to the orthogonality relations. The next two identities follow from the $*$-structure of $\mathcal A_q$. From $\be^* = -q\ga$ we obtain
\[
\langle e_{x,p_1\pm 1,r_1}^{1(23)}, e_{x,p_2,r_2}^{(12)3} \rangle = \langle e_{x,p_1,r_1}^{1(23)}, e_{x,p_2 \mp 1,r_2}^{(12)3} \rangle,
\]
which implies (ii). Identity (iii) follows from $\al^*=\de$.
Identity (iv) follow from the expansion
\[
e_{x,p_1,r_1}^{1(23)} =\sum_{p_2,r_2\in \Z} R_{p_1,r_1;p_2,r_2}\, e_{x,p_2,r_2}^{(12)3}.
\]
by taking inner products with $e_{n} \tensor e_m \tensor e_k$. The duality property follows from identity (iv).
\end{proof}

\subsection{$q$-Bessel functions}
Define
\[
J_\nu(x;q)= x^\frac{\nu}{2} \frac{ (q^{\nu+1};q)_\infty}{(q;q)_\infty}\rphis{1}{1}{0}{q^{\nu+1}}{q,qx}, \qquad x\geq 0,\ \nu \not\in \Z_{\leq -1},
\]
which is Jackson's third $q$-Bessel function (also known as the Hahn-Exton $q$-Bessel function), see e.g.~\cite{KooSw}. We will need the $q$-Bessel functions with $\nu \in \Z$; for negative integers they are defined by 
\[
J_{-n}(x;q) = (-1)^n q^{\frac{n}{2}} J_n(xq^n;q), \qquad n \in \N,
\]
see \cite[(2.6)]{KooSw}. We will use the following generating function to identify the $6j$-symbols with $q$-Bessel functions.
\begin{prop} \label{prop:generating function}
For $|t|< 1$,
\[
\sum_{m=0}^\infty q^{-\frac{\nu m}{2}}J_{\nu}(xq^{m};q) \frac{ t^m }{(q;q)_m} = x^\frac{\nu}{2} \frac{(q^{\nu+1};q)_\infty}{(q,t;q)_\infty } \rphis{1}{1}{t}{q^{\nu+1}}{q,qx}.
\]
\end{prop}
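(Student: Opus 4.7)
The plan is a direct computation: expand $J_\nu(xq^m;q)$ as a ${}_1\varphi_1$-series, substitute into the left-hand side, interchange the two sums, and recognize the inner sum as a $q$-exponential.

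First I would write, using the definition of $J_\nu$ and the series representation of ${}_1\varphi_1$,
\begin{equation*}
J_\nu(xq^m;q) = (xq^m)^{\nu/2}\, \frac{(q^{\nu+1};q)_\infty}{(q;q)_\infty} \sum_{k=0}^\infty \frac{(-1)^k q^{\binom{k}{2}} (qx)^k q^{mk}}{(q^{\nu+1};q)_k\,(q;q)_k}.
\end{equation*}
The factor $(xq^m)^{\nu/2} = x^{\nu/2} q^{\nu m/2}$ is the source of the half-integer powers of $q$, and the factor $q^{-\nu m/2}$ on the left-hand side of the proposition is precisely what is needed to cancel $q^{\nu m/2}$. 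After this cancellation the $m$-dependence enters only through the factor $(q^k t)^m/(q;q)_m$.

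Next I would interchange the sums over $m$ and $k$; for $|t|<1$ this is legitimate because the series for ${}_1\varphi_1(0;q^{\nu+1};q,qx)$ converges absolutely (the coefficient of $(qx)^k$ decays like $q^{\binom{k}{2}}$) and the sum over $m$ gives a finite result for each $k$. The inner sum becomes
\begin{equation*}
\sum_{m=0}^\infty \frac{(q^k t)^m}{(q;q)_m} = \frac{1}{(q^k t;q)_\infty} = \frac{(t;q)_k}{(t;q)_\infty},
\end{equation*}
by the $q$-exponential identity (equivalently, the $q$-binomial theorem with a single parameter sent to $0$; see \cite[(II.2)]{GR}).

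Collecting, the left-hand side equals
\begin{equation*}
x^{\nu/2}\, \frac{(q^{\nu+1};q)_\infty}{(q;q)_\infty\,(t;q)_\infty} \sum_{k=0}^\infty \frac{(t;q)_k\,(-1)^k q^{\binom{k}{2}}(qx)^k}{(q^{\nu+1};q)_k\,(q;q)_k},
\end{equation*}
and the remaining sum is by definition $\rphis{1}{1}{t}{q^{\nu+1}}{q,qx}$, which matches the right-hand side. The only non-routine point is the interchange of summations, but the absolute convergence for $|t|<1$ and $x\geq 0$ is immediate from the $q^{\binom{k}{2}}$ factor, so there is no real obstacle.
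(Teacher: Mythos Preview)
Your proof is correct and follows essentially the same route as the paper: expand the ${}_1\varphi_1$, interchange sums, and evaluate the inner $m$-sum via the $q$-exponential identity. The only cosmetic difference is that the paper cites \cite[(II.1)]{GR} and leaves the answer in the form $1/(tq^k;q)_\infty$, whereas you cite \cite[(II.2)]{GR} and take the extra step of rewriting $1/(tq^k;q)_\infty = (t;q)_k/(t;q)_\infty$ to identify the ${}_1\varphi_1$ explicitly.
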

\begin{proof}
Write $J_\nu$ as a $_1\varphi_1$-series, interchange the order of summation, and use summation formula \cite[(II.1)]{GR};
\[
\begin{split}
\sum_{m=0}^\infty \rphis{1}{1}{0}{q^{\nu+1}}{q,xq^{m+1}} \frac{t^m }{(q;q)_m} &= \sum_{k=0}^\infty \frac{ (-1)^k q^{\frac12 k(k-1)} (xq)^k }{(q,q^{\nu+1};q)_k} \sum_{m=0}^\infty \frac{q^{mk} t^m}{(q;q)_m} \\
& = \sum_{k=0}^\infty \frac{ (-1)^k q^{\frac12 k(k-1)} (xq)^k }{(q,q^{\nu+1};q)_k (tq^{k};q)_\infty }. \qedhere
\end{split}
\]
\end{proof}
If $t^{-1}q^{\nu+1}\in q^{-\N}$ the right hand side in the Proposition \ref{prop:generating function} can be written in terms of a Wall polynomial, which gives the following special case.
\begin{cor} \label{cor:generating function}
For $n \in \N$,
\[
\sum_{m=0}^\infty q^{-\frac12(\nu-n)m}J_{\nu-n}(xq^{m};q) \frac{ q^{m(\nu+1)} }{(q;q)_m} = x^{\frac12(\nu-n)} \frac{(qx;q)_\infty}{(q;q)_\infty } p_n(q^{\nu};x;q).
\]
\end{cor}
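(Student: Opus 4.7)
The plan is to obtain the corollary as a direct specialization of Proposition \ref{prop:generating function}. I would substitute $\nu \mapsto \nu - n$ and set $t = q^{\nu+1}$ (so that $|t| < 1$ when $\nu > -1$). The left-hand side of the proposition then becomes exactly the sum appearing in the corollary. The right-hand side becomes
\[
x^{(\nu-n)/2}\,\frac{(q^{\nu-n+1};q)_\infty}{(q,q^{\nu+1};q)_\infty}\,\rphis{1}{1}{q^{\nu+1}}{q^{\nu-n+1}}{q,qx},
\]
so the problem reduces to identifying this $_1\varphi_1$ with the Wall polynomial $p_n(q^\nu;x;q)$.

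For the identification I would use the auxiliary identity
\[
\rphis{2}{1}{a,0}{c}{q,z} = \frac{(az;q)_\infty}{(c,z;q)_\infty}\,\rphis{1}{1}{z}{az}{q,c},
\]
which follows from Heine's transformation \cite[(III.1)]{GR} in the limit $b\to 0$, using $(c/b;q)_k\,b^k \to (-c)^k q^{\binom{k}{2}}$ term by term. Applied with $(a,c,z)=(q^{-n},xq,q^{\nu+1})$, the left-hand side is exactly $p_n(q^\nu;x;q)$, yielding
\[
p_n(q^\nu;x;q) = \frac{(q^{\nu-n+1};q)_\infty}{(xq,q^{\nu+1};q)_\infty}\,\rphis{1}{1}{q^{\nu+1}}{q^{\nu-n+1}}{q,xq}.
\]
Solving for the $_1\varphi_1$ and substituting into the expression above, the factors $(q^{\nu-n+1};q)_\infty$ and $(q^{\nu+1};q)_\infty$ cancel, leaving the claimed right-hand side $x^{(\nu-n)/2}\,\frac{(qx;q)_\infty}{(q;q)_\infty}\,p_n(q^\nu;x;q)$.

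The main obstacle is really just recognizing the correct specialization together with the $_2\varphi_1$-to-$_1\varphi_1$ transformation that converts the generating-function output into a Wall polynomial; once those are in hand, everything else is bookkeeping with $q$-shifted factorials. The only mildly delicate step is justifying the $b\to 0$ limit of (III.1), but that is standard since both sides are absolutely convergent for $|z|<1$.
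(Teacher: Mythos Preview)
Your proof is correct and follows essentially the same route as the paper: specialize Proposition~\ref{prop:generating function} via $\nu\mapsto\nu-n$, $t=q^{\nu+1}$, and then convert the resulting ${}_1\varphi_1$ to the Wall polynomial using the same transformation. The only cosmetic difference is that the paper cites that transformation as a special case of \cite[(III.4)]{GR}, whereas you obtain it as the $b\to 0$ limit of Heine's transformation \cite[(III.1)]{GR}; either derivation gives the identical identity.
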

\begin{proof}
In Proposition \ref{prop:generating function} replace $\nu$ by $\nu-n$, set $t=q^{\nu+1}$, and use the transformation
\[
\rphis{1}{1}{A}{B}{q,Z} = \frac{ (A,Z;q)_\infty }{ (B;q)_\infty } \rphis{2}{1}{ 0 , B/A}{Z}{q,A},
\]
(which is a special case of \cite[(III.4)]{GR}) and the definition \eqref{eq:defWallpol} of the Wall polynomials.
\end{proof}
We are now in a position to show that the $6j$-symbols are essentially $q$-Bessel functions.

\begin{prop} \label{prop:R=qBessel}
For $p_1,p_2,r_1,r_2\in \Z$,
\[
R_{p_1,r_1;p_2,r_2} = \de_{r_1,r_2}\,(-q)^{p_1-p_2} J_{r_1}(q^{2p_1-2p_2};q^2). \qedhere
\]
\end{prop}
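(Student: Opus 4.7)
The plan proceeds in three steps: (a) reduce to $r_1=r_2$ using the support of the eigenvectors, (b) reduce to a single scalar $\tilde R_p$ using the properties in Proposition~\ref{prop:properties R}, and (c) match a specialisation of identity (iv) of that proposition with the generating function of Corollary~\ref{cor:generating function}.

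For (a), inspection of the expansions of $e^{1(23)}_{x,p,r}$ and $e^{(12)3}_{x,p,r}$ given just before \eqref{eq:e=sum Re} shows that both are supported on basis vectors $e_n \otimes e_m \otimes e_k$ satisfying $n-m+k=x-r$. Hence eigenvectors with distinct $r$-labels are orthogonal and $R_{p_1,r_1;p_2,r_2}=0$ unless $r_1=r_2$. Set $r_1=r_2=r$ henceforth. For (b), Proposition~\ref{prop:properties R}(ii)--(iii) shows $R_{p_1,r;p_2,r}$ is independent of $x$ and depends on $(p_1,p_2)$ only through $p:=p_1-p_2$. Writing $\tilde R_p := R_{p,r;0,r}$, the claim becomes $\tilde R_p = (-q)^p J_r(q^{2p};q^2)$.

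For (c), take Proposition~\ref{prop:properties R}(iv) with $p_1=0$, change the summation variable to $j=-p_2$, and use the shift-invariance from (ii) to rewrite the identity as
\[
C_{x,n,n}\, C_{n,m,k} \;=\; \sum_{j \geq -k} \tilde R_j\, C_{x,k+j,k}\, C_{k+j,m,n}, \qquad x-r=n-m+k.
\]
Substitute the conjectured $\tilde R_j=(-q)^j J_r(q^{2j};q^2)$, express each $C$ in terms of Wall polynomials via \eqref{eq:defWallpol}--\eqref{eq:defbarpn}, and expand $J_r$ as a ${}_1\varphi_1$-series. After cancelling common prefactors, the resulting inner sum matches the generating function in Corollary~\ref{cor:generating function} with $q$ replaced by $q^2$ and with the parameters $\nu,n,x$ there identified in terms of $r,n,x,k,m$ here; it therefore collapses to a single Wall polynomial and recovers the left-hand side. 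A cleaner variant specialises further to $m=0$, so that several $C$'s reduce to $\bar p_0$'s and the match with Corollary~\ref{cor:generating function} becomes immediate, allowing one to read $\tilde R_j$ directly from the coefficients in the ${}_1\varphi_1$-series.

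The main obstacles are bookkeeping: combining the square-root prefactors in \eqref{eq:defbarpn}, the sign $(-q)^j$, and the power-series coefficients of $J_r$ so that they align with the ratio $q^{m(\nu+1)}/(q;q)_m$ appearing in Corollary~\ref{cor:generating function}; treating the $-k \leq j < 0$ tail of the sum, where $C_{x,k+j,k}$ is given by the other branch of its defining formula; and handling $r<0$ via $J_{-n}(x;q)=(-1)^n q^{n/2} J_n(xq^n;q)$ combined with the duality in Proposition~\ref{prop:properties R}(v), which reduces to the case $r \geq 0$.
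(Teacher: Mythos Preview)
Your plan is essentially the paper's---specialise Proposition~\ref{prop:properties R}(iv) and match against Corollary~\ref{cor:generating function}---but with a less convenient specialisation. The paper sets $m=k=0$ (rather than $p_1=0$, $m=0$): then the summation condition $p_2\le k=0$ forces $j=-p_2\ge 0$, so the tail $-k\le j<0$ and the ``other branch'' of $C$ never arise; the four Clebsch--Gordan coefficients collapse to explicit monomial factors times a single Wall polynomial $p_n(q^{2x};q^{2p_1};q^2)$; and the resulting identity is exactly the $q\mapsto q^2$ instance of Corollary~\ref{cor:generating function}, from which $R_{p_1,r;-p_2,r}$ is read off term by term. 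All three of your ``main obstacles'' simply disappear with this choice, including the $r<0$ case (one takes $x$ large so that $n=x-r\in\N$, and $J_r$ for $r<0$ is already defined).

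One further remark: your first version of step~(c)---substitute the conjectured $\tilde R_j$ and verify that (iv) holds---is not a proof without an accompanying uniqueness argument explaining why identity~(iv), for the chosen specialisation, determines the $\tilde R_j$. Your ``cleaner variant'' and the paper's proof avoid this by reading the coefficients off directly from the generating function, which is the right way to close the argument.
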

\begin{proof}
We write out Proposition \ref{prop:properties R}(iv) for $m=k=0$, and we replace $p_2$ by $-p_2$,
\[
\sum_{p_2 \in \N} R_{p_1,r;-p_2,r} \frac{ (-1)^{p_2} q^{p_2(n+x+1)} }{(q^2;q^2)_{p_2}} = \frac{(-1)^{p_1} q^{p_1(x-n+1)}}{ (q^2;q^2)_{p_1} } p_n(q^{2x};q^{2p_1};q^2), \qquad x-r=n,
\]
then the result follows from Corollary \ref{cor:generating function}.
\end{proof}

\subsection{Identities}
Several classical identities for $6j$-symbols for $\mathrm{SU}(2)$ remain valid for our $6j$-symbols. By Proposition \ref{prop:R=qBessel} these can be interpreted as identities for $q$-Bessel functions. \\

First of all, the orthogonality relations for the $6j$-symbols from Proposition \ref{prop:properties R} are equivalent to the well-known $q$-Hankel orthogonality relations, see \cite[(2.11)]{KooSw}, for the $q$-Bessel functions $J_\nu$.
\begin{thm} \label{thm:orthogonality}
For $n,m \in \Z$,
\[
\sum_{x \in \Z} J_\nu(q^{x+m};q)J_\nu(q^{x+n};q) q^{x} = \de_{m,n} q^{-n}  .
\]
\end{thm}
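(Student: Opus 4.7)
The plan is to read off the orthogonality relation directly from Proposition \ref{prop:properties R}(i) via the explicit identification of $R$ with $q$-Bessel functions in Proposition \ref{prop:R=qBessel}. Substituting
\[
R_{p_1,r_1;p_2,r_2} = \de_{r_1,r_2}(-q)^{p_1-p_2} J_{r_1}(q^{2p_1-2p_2};q^2)
\]
into
\[
\sum_{p_1,r_1\in \Z} R_{p_1,r_1;p_2,r_2} R_{p_1,r_1;p_3,r_3} = \de_{p_2,p_3}\de_{r_2,r_3},
\]
the two Kronecker deltas in $r_1$ collapse the $r_1$-sum and force $r_2=r_3$. Setting $r_2=r_3=\nu$, the signs $(-q)^{p_1-p_2}(-q)^{p_1-p_3}$ combine to $q^{2p_1-p_2-p_3}$, so we obtain
\[
\sum_{p_1 \in \Z} q^{2p_1-p_2-p_3}\, J_\nu(q^{2p_1-2p_2};q^2)\, J_\nu(q^{2p_1-2p_3};q^2) = \de_{p_2,p_3}.
\]

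The next step is a change of variables. Writing $\tilde q = q^2$, relabelling $p_1 \mapsto x$, $-p_2 \mapsto m$, $-p_3 \mapsto n$, and multiplying through by $\tilde q^{-(m+n)/2}$ puts the identity in the form
\[
\sum_{x \in \Z} \tilde q^{\,x}\, J_\nu(\tilde q^{\,x+m};\tilde q)\, J_\nu(\tilde q^{\,x+n};\tilde q) = \tilde q^{-n}\de_{m,n},
\]
which is the stated identity after renaming $\tilde q$ back to $q$ (this is harmless since $q\in(0,1)$ was arbitrary and the sum converges for all $0<q<1$).

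There is no genuine obstacle here beyond careful bookkeeping of indices and exponents; the essential content, namely convergence and the identification of the two-sided Jackson-type orthogonality, is already packaged into Propositions \ref{prop:properties R} and \ref{prop:R=qBessel}. The only mild subtlety is ensuring the equality $\tilde q^{-(m+n)/2}\de_{m,n} = \tilde q^{-n}\de_{m,n}$, which holds because the factor matters only on the diagonal $m=n$.
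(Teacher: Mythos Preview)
Your argument is essentially the one the paper has in mind: the text there simply states that the orthogonality relations in Proposition \ref{prop:properties R}(i) are equivalent to the $q$-Hankel orthogonality, and you have spelled out that equivalence via Proposition \ref{prop:R=qBessel}. One small slip: $(-q)^{p_1-p_2}(-q)^{p_1-p_3}$ equals $(-1)^{p_2+p_3}q^{2p_1-p_2-p_3}$, not $q^{2p_1-p_2-p_3}$; however this extra sign is constant in $p_1$ and equals $1$ on the diagonal $p_2=p_3$, so the conclusion is unaffected.
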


To derive other identities it is convenient to represent eigenvectors of $\ga\ga^*$ as binary trees, see e.g.~Van der Jeugt's lecture notes \cite{Jeugt} for more details. We denote
\[
e_{x,n_2-n_1}^{12} = \
\begin{tikzpicture}
[baseline={([yshift=-.5ex]current bounding box.center)}]
\node[Solid](x){}
    child{node[Solid](n1){}}
    child{node[Solid](n2){}};
\node[above] at (x) {$x$};
\node[below] at (n1) {$n_1$};
\node[below] at (n2) {$n_2$};
\end{tikzpicture}
\]
where $n_1,n_2,x \in \Z$. Equivalently, we can identify this tree with the Clebsch-Gordan coefficient $C_{x,n_1,n_2}$, similar as in \cite{Sc}.
The identity $e_{x,p}^{12} = e_{x,-p}^{21}$, which is equivalent to \eqref{eq:c p<->-p}, is represented as
\begin{equation} \label{eq:tree flip}
\begin{tikzpicture}
[baseline={([yshift=-.5ex]current bounding box.center)}]
\node[Solid](x){}
    child{node[Solid](n1){}}
    child{node[Solid](n2){}};
\node[above] at (x) {$x$};
\node[below] at (n1) {$n_1$};
\node[below] at (n2) {$n_2$};
\end{tikzpicture}
\
=\
\begin{tikzpicture}
[baseline={([yshift=-.5ex]current bounding box.center)}]
\node[Solid](x){}
    child{node[Solid](n1){}}
    child{node[Solid](n2){}};
\node[above] at (x) {$x$};
\node[below] at (n1) {$n_2$};
\node[below] at (n2) {$n_1$};
\end{tikzpicture}
\end{equation}
where $p=n_1-n_2$. By coupling two of these we can represent eigenvectors corresponding to  three-fold tensor products:
\[
e_{x,p_1,r_{123}}^{1(23)} =
\begin{tikzpicture}
[baseline={([yshift=-.5ex]current bounding box.center)}]
\node[Solid](x){}
    child{
        child{node[Solid](n1){}}
        child[missing]
        }
    child{node[Solid](p1){}
        child{node[Solid](n2){}}
        child{node[Solid](n3){}}
        };
\node[above] at (x) {$x$};
\node[below] at (n1) {$n_1$};
\node[below] at (n2) {$n_2$};
\node[below] at (n3) {$n_3$};
\node[right] at (p1) {$p_1'$};
\end{tikzpicture}
\qquad \qquad
e_{x,p_2,r_{123}}^{(12)3} =
\begin{tikzpicture}
[baseline={([yshift=-.5ex]current bounding box.center)}]
\node[Solid](x){}
    child{node[Solid](p2){}
        child{node[Solid](n1){}}
        child{node[Solid](n2){}}
        }
    child{
        child[missing]
        child{node[Solid](n3){}}
        };
\node[above] at (x) {$x$};
\node[below] at (n1) {$n_1$};
\node[below] at (n2) {$n_2$};
\node[below] at (n3) {$n_3$};
\node[left] at (p2) {$p_2'$};
\end{tikzpicture}
\]
where $p_1'=n_1+p_1$, $p_2'=n_3-p_2$ and $r_{ijk} = x-n_i+n_j-n_k$ for $i,j,k \in \{1,2,3\}$. Now we can e.g.~represent the identities $e_{x,p_1,r_{123}}^{1(23)} = e_{x,p_1,r_{132}}^{1(32)} = e_{x,-p_1,r_{231}}^{(23)1}$ by
\[
\begin{tikzpicture}
[baseline={([yshift=-.5ex]current bounding box.center)}]
\node[Solid](x){}
    child{
        child{node[Solid](n1){}}
        child[missing]
        }
    child{node[Solid](p1){}
        child{node[Solid](n2){}}
        child{node[Solid](n3){}}
        };
\node[above] at (x) {$x$};
\node[below] at (n1) {$n_1$};
\node[below] at (n2) {$n_2$};
\node[below] at (n3) {$n_3$};
\node[right] at (p1) {$p_1'$};
\end{tikzpicture}
\quad  = \quad
\begin{tikzpicture}
[baseline={([yshift=-.5ex]current bounding box.center)}]
\node[Solid](x){}
    child{
        child{node[Solid](n1){}}
        child[missing]
        }
    child{node[Solid](p1){}
        child{node[Solid](n2){}}
        child{node[Solid](n3){}}
        };
\node[above] at (x) {$x$};
\node[below] at (n1) {$n_1$};
\node[below] at (n2) {$n_3$};
\node[below] at (n3) {$n_2$};
\node[right] at (p1) {$p_1'$};
\end{tikzpicture}
\quad = \quad
\begin{tikzpicture}
[baseline={([yshift=-.5ex]current bounding box.center)}]
\node[Solid](x){}
    child{node[Solid](p2){}
        child{node[Solid](n1){}}
        child{node[Solid](n2){}}
        }
    child{
        child[missing]
        child{node[Solid](n3){}}
        };
\node[above] at (x) {$x$};
\node[below] at (n1) {$n_3$};
\node[below] at (n2) {$n_2$};
\node[below] at (n3) {$n_1$};
\node[left] at (p2) {$p_1'$};
\end{tikzpicture}
\]
The transition \eqref{eq:e=sum Re} from $e_{x,p_1,r}^{1(23)}$ to $e_{x,p_2,r}^{(12)3}$ which involves a $6j$-symbol, which is equivalent to identity (ii) in Proposition \ref{prop:properties R} in terms of Clebsch-Gordan coefficients, is represented as
\[
\begin{tikzpicture}
[baseline={([yshift=-.5ex]current bounding box.center)}]
\node[Solid](x){}
    child{
        child{node[Solid](n1){}}
        child[missing]
        }
    child{node[Solid](p1){}
        child{node[Solid](n2){}}
        child{node[Solid](n3){}}
        };
\node[above] at (x) {$x$};
\node[below] at (n1) {$n_1$};
\node[below] at (n2) {$n_2$};
\node[below] at (n3) {$n_3$};
\node[right] at (p1) {$p_1'$};

\draw [->,thick] (1.25,-.8) -- ++(0:1.5)
    node[draw=none,fill=none,rectangle,above,midway]{$R_{p_1',p_2'}^{x,n_1,n_2,n_3}$};

\node[Solid](x) at (4,0) {}
    child{node[Solid](p2){}
        child{node[Solid](n1){}}
        child{node[Solid](n2){}}
        }
    child{
        child[missing]
        child{node[Solid](n3){}}
        };
\node[above] at (x) {$x$};
\node[below] at (n1) {$n_1$};
\node[below] at (n2) {$n_2$};
\node[below] at (n3) {$n_3$};
\node[left] at (p2) {$p_2'$};
\end{tikzpicture}
\]
where the coefficient $R$ is given by
\begin{equation} \label{eq:R=J}
R_{p_1',p_2'}^{x,n_1,n_2,n_3} =  R_{p_1,r_{123};p_2,r_{123}} =(-q)^{p_1'+p_2'-n_1-n_3} J_{x-n_1+n_2-n_3}(q^{2p_1'+2p_2'-2n_1-2n_3};q^2).
\end{equation}
Note that the transition from right to left involves exactly the same $6j$-symbol.
To find identities for the $6j$-symbols we can use the binary trees, and identities for these trees as explained above, without referering to the underlying eigenvectors. We obtain the following identities, which can be considered as analogs of Racah's backcoupling identity, the Biedenharn-Elliot (or pentagon) identity and the hexagon identity.
\begin{thm} \label{thm:identitiesJ}
The following identities hold:
\begin{enumerate}[(i)]
\item \hfill
$\displaystyle
R^{x,n_1,n_2,n_3}_{p_1,p_2} = \sum_{p \in \Z} R^{x,n_1,n_3,n_2}_{p_1,p}  R^{x,n_3,n_1,n_2}_{p,p_2},
$ \hfill \*\\
or in terms of $q$-Bessel functions
\[
\begin{split}
J_{r_{123}}(q^{p_1+p_2};q) =   \sum_{p \in\Z} J_{r_{132}}(q^{p+p_1};q) J_{r_{312}}(q^{p+p_2};q) q^{p},
\end{split}
\]
where $r_{ijk} = x-n_i+n_j-n_k$.
\item \hfill
$\displaystyle
R_{r_1,p_2}^{x,n_1,n_2,p_1} R_{p_1,r_2}^{x,p_2,n_3,n_4} = \sum_{p\in \Z} R_{p_1,p}^{r_1,n_2,n_3,n_4}R_{r_1,r_2}^{x,n_1,p,n_4}R_{p,p_2}^{r_2,n_1,n_2,n_3},
$ \hfill \* \\
which in terms of $q$-Bessel functions is equivalent to the product formula
\[
\begin{split}
J_{\nu+\mu_1}(q^{P-Q};q) J_{\nu+\mu_2}(q^{Q-R};q)
= \sum_{\mu \in \Z} A_{P,Q,R}^{\mu_1,\mu_2,\mu} \, J_{\nu+\mu}(q^{P-R};q)
\end{split}
\]
where $P,Q,R,\nu,\mu_1,\mu_2 \in \Z$ and
\[
A_{P,Q,R}^{\mu_1,\mu_2,\mu} = (-1)^{\mu_1+\mu_2}q^{\mu-\frac12(\mu_1+\mu_2)}J_{\mu_2-\mu_1+P-Q}(q^{\mu-\mu_1};q) J_{\mu_1-\mu_2+Q-R}(q^{\mu-\mu_2};q).
\]

\item \hfill
$ \displaystyle
\sum_{r \in \Z} R_{p_2,r}^{x,p_1,n_3,n_4} R_{p_3,p_1}^{r,n_2,n_1,n_3} R_{p_4,r}^{x,p_3,n_2,n_4} =
\sum_{r \in \Z} R_{r,p_1}^{x,n_1,n_2,p_2}R_{p_2,p_4}^{r,n_2,n_4,n_3}R_{r,p_3}^{x,n_1,n_3,p_4},
$ \hfill \* \\
or in terms of $q$-Bessel functions,
\[
\begin{split}
&\sum_{r\in\Z} (-1)^{p_2+p_4}q^{r -n_4+\frac12(p_2+p_4)} J_{r-n_2+n_1-n_3}(q^{p_1+p_3-n_2-n_3};q) \\
& \qquad \times J_{x-p_1+n_3-n_4}(q^{r+p_2-p_1-n_4};q) J_{x-p_3+n_2-n_4}(q^{r+p_4-p_3-n_4};q) \\
 = &\, \mathrm{idem}\big( (n_1,n_2,p_1,p_3) \leftrightarrow (n_4,n_3,p_2,p_4) \big) .
\end{split}
\]
\end{enumerate}
\end{thm}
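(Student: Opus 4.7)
I would prove each of the three identities in two steps: first, derive an identity among the $R$-coefficients by manipulating eigenvectors (equivalently, products of Clebsch--Gordan coefficients $C_{x,n,m}$, or the binary trees of Section \ref{sec:6j-symbols}); second, translate the result to $q$-Bessel functions using the explicit formula \eqref{eq:R=J}. At the tree level, only two elementary moves are needed: the CG symmetry $C_{y,a,b}=C_{y,b,a}$ from \eqref{eq:c p<->-p}, which implements the tree flip \eqref{eq:tree flip} and contributes no numerical factor; and the $R$-expansion of Proposition \ref{prop:properties R}(iv), which rewrites a ``right-bracketed'' CG product $C_{x,n,p'}C_{p',m,k}$ as a sum over the internal label of ``left-bracketed'' products $C_{x,q',k}C_{q',n,m}$, or vice versa using that the $R$-matrix is real orthogonal. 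Each identity of Theorem \ref{thm:identitiesJ} corresponds to a coherence relation between two different compositions of these moves starting and ending with the same eigenvector; orthonormality of the ending basis then forces the scalar identity for $R$.

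For part (i), I would start from $C_{x,n_1,p_1}C_{p_1,n_2,n_3}$, apply CG symmetry to swap $n_2\leftrightarrow n_3$ in the second factor, then expand via Proposition \ref{prop:properties R}(iv) with leaves $(n_1,n_3,n_2)$ to obtain $\sum_{s}R^{x,n_1,n_3,n_2}_{p_1,s}\,C_{x,s,n_2}C_{s,n_1,n_3}$. A second CG symmetry $C_{s,n_1,n_3}=C_{s,n_3,n_1}$ recasts each summand as a left-bracketed product for leaves $(n_3,n_1,n_2)$; the inverse of Proposition \ref{prop:properties R}(iv) with those leaves (real-orthogonal inverse being the transpose) expands it in the right-bracketed basis $C_{x,n_3,P}C_{P,n_1,n_2}$. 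A final CG symmetry $C_{x,n_3,P}=C_{x,P,n_3}$ brings the expansion into the standard left-bracketed basis for leaves $(n_1,n_2,n_3)$ indexed by $P$. Comparing the resulting double sum with the direct $R$-expansion and using orthonormality of the $(12)3$-basis yields
\[
R^{x,n_1,n_2,n_3}_{p_1,p_2}=\sum_{s\in\Z}R^{x,n_1,n_3,n_2}_{p_1,s}R^{x,n_3,n_1,n_2}_{p_2,s},
\]
and the symmetry of \eqref{eq:R=J} in its two lower indices gives the form stated in (i). Substituting \eqref{eq:R=J}, combining the $(-q)^{\cdots}$ prefactors, and performing the appropriate summation-variable shift yields the claimed $q$-Bessel identity.

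For parts (ii) and (iii) the same principle applies with more moves. Part (ii) is the coherence of associativity for the four-fold tensor product $\pi_0\tensor\pi_0\tensor\pi_0\tensor\pi_0$: the five binary trees with four leaves sit at the vertices of Mac~Lane's pentagon, and the two paths between opposite vertices give the Biedenharn--Elliott identity --- two $R$-moves on one side, three $R$-moves summed over the intermediate internal label on the other. Part (iii) is the hexagon (coherence of the braiding) for the three-fold tensor product with the flip $\si$ included as a braid move; the six trees form a hexagon, with three $R$-moves on each side summed over internal labels. In both cases substituting \eqref{eq:R=J} and simplifying yields the stated $q$-Bessel identities; the hexagon identity in particular exhibits the $q$-Bessel $R$-matrix as a solution of the Yang--Baxter / star-triangle equation, as alluded to in the introduction.

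The conceptual content is clean (coherence in a (braided) monoidal category), but the main obstacle is the bookkeeping involved in converting an $R$-identity into the corresponding $q$-Bessel identity: one must carefully track the leaves and internal labels through each tree flip and $R$-move, collect the resulting $(-q)^{\cdots}$ prefactors, and perform the correct variable shifts in the internal summations. This is most laborious in part (iii), where three summations interleave on each side of the hexagon; verifying that the arguments of $J_\nu$ on both sides match precisely is the most delicate step.
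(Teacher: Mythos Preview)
Your approach is exactly the paper's: each identity is a coherence relation among binary trees, obtained by composing tree flips \eqref{eq:tree flip} and $R$-moves (Proposition \ref{prop:properties R}(iv)), then translated via \eqref{eq:R=J}. The paper presents (i) as a triangle of three trees, (ii) as the pentagon, and (iii) as a hexagon, precisely as you outline.

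One correction for part (iii): the hexagon here is drawn on the \emph{four}-fold tensor product, not the three-fold one. All six trees carry leaves $n_1,n_2,n_3,n_4$; the middle move on each side is an $R$-move on an inner three-leaf subtree (which effects the swap $n_2\leftrightarrow n_3$), and the outer two moves are $R$-moves at the root. Consequently each side involves a \emph{single} summation over the one free internal label $r$ (shared by the three $R$'s on that side), not three interleaved summations as you wrote. If you attempt the braided-category hexagon on a three-fold product you will not recover the stated identity, so be sure to set up the trees with four leaves when you carry out the bookkeeping.
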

Here `idem' means that the same expression is inserted but with the parameters interchanged as indicated.
\begin{proof}
The first identity follows from
\begin{center}
\begin{tikzpicture}

\node[Solid](x){}
    child{
        child{node[Solid](n1){}}
        child[missing]
        }
    child{node[Solid](p1){}
        child{node[Solid](n2){}}
        child{node[Solid](n3){}}
        };
\node[above] at (x) {$x$};
\node[below] at (n1) {$n_1$};
\node[below] at (n2) {$n_2$};
\node[below] at (n3) {$n_3$};
\node[right] at (p1) {$p_1$};


\node[Solid](x) at (4,0) {}
    child{node[Solid](p2){}
        child{node[Solid](n1){}}
        child{node[Solid](n2){}}
        }
    child{
        child[missing]
        child{node[Solid](n3){}}
        };
\node[above] at (x) {$x$};
\node[below] at (n1) {$n_1$};
\node[below] at (n2) {$n_2$};
\node[below] at (n3) {$n_3$};
\node[left] at (p2) {$p_2$};


\node[Solid](x) at (2,-2) {}
    child{node[Solid](p){}
        child{node[Solid](n1){}}
        child{node[Solid](n2){}}
        }
    child{
        child[missing]
        child{node[Solid](n3){}}
        };
\node[above] at (x) {$x$};
\node[below] at (n1) {$n_1$};
\node[below] at (n2) {$n_3$};
\node[below] at (n3) {$n_2$};
\node[left] at (p) {$p$};

\draw [->,thick] (1.25,-.8) -- ++(0:1.5)
    node[draw=none,fill=none,rectangle,above,midway]{$R_{p_1,p_2}^{x,n_1,n_2,n_3}$};
\draw [->,thick] (.25,-1.75) -- ++((-45:1.2)
    node[draw=none,fill=none,rectangle,below left,midway]{$R_{p_1,p}^{x,n_1,n_3,n_2}$};
\draw [<-,thick] (3.75,-1.75) -- ++((-135:1.2)
    node[draw=none,fill=none,rectangle,below right,midway]{$R_{p,p_2}^{x,n_3,n_1,n_2}$};
\end{tikzpicture}
\end{center}
The second identity is
\begin{center}
\begin{tikzpicture}
\node[Solid](x) at (0,0) {}
    child{
        child{
            child{node[Solid](n1){}}
            child[missing]
            }
        child[missing]
        }
    child{node[Solid](r1){}
        child{
            child{node[Solid](n2){}}
            child[missing]
            }
        child{node[Solid](p1){}
            child{node[Solid](n3){}}
            child{node[Solid](n4){}}
            }
        };
\node[above] at (x) {$x$};
\node[below] at (n1) {$n_1$};
\node[below] at (n2) {$n_2$};
\node[below] at (n3) {$n_3$};
\node[below] at (n4) {$n_4$};
\node[right] at (r1) {$r_1$};
\node[right] at (p1) {$p_1$};


\node[Solid](x) at (4,0) {}
    child{
        child{node[Solid](p2){}
            child{node[Solid](n1){}}
            child{node[Solid](n2){}}
            }
        child[missing]
        }
    child{
        child[missing]
        child{node[Solid](p1){}
            child{node[Solid](n3){}}
            child{node[Solid](n4){}}
            }
        };

\node[above] at (x) {$x$};
\node[below] at (n1) {$n_1$};
\node[below] at (n2) {$n_2$};
\node[below] at (n3) {$n_3$};
\node[below] at (n4) {$n_4$};
\node[left] at (p2) {$p_2$};
\node[right] at (p1) {$p_1$};

\node[Solid](x) at (8,0) {}
    child{node[Solid](r2){}
        child{node[Solid](p2){}
            child{node[Solid](n1){}}
            child{node[Solid](n2){}}
            }
        child{
            child[missing]
            child{node[Solid](n3){}}
            }
        }
    child{
        child[missing]
        child{
            child[missing]
            child{node[Solid](n4){}}
            }
        };

\node[above] at (x) {$x$};
\node[below] at (n1) {$n_1$};
\node[below] at (n2) {$n_2$};
\node[below] at (n3) {$n_3$};
\node[below] at (n4) {$n_4$};
\node[left] at (p2) {$p_2$};
\node[left] at (r2) {$r_2$};


\node[Solid](x) at (2,-3) {}
    child{
        child{
            child{node[Solid](n1){}}
            child[missing]
            }
        child[missing]
        }
    child{node[Solid](r1){}
        child{node[Solid](p){}
            child{node[Solid](n2){}}
            child{node[Solid](n3){}}
            }
        child{
            child[missing]
            child{node[Solid](n4){}}
            }
        };

\node[above] at (x) {$x$};
\node[below] at (n1) {$n_1$};
\node[below] at (n2) {$n_2$};
\node[below] at (n3) {$n_3$};
\node[below] at (n4) {$n_4$};
\node[right] at (r1) {$r_1$};
\node[left] at (p) {$p$};


\node[Solid](x) at (6,-3) {}
    child{node[Solid](r2){}
        child{
            child{node[Solid](n1){}}
            child[missing]
            }
        child{node[Solid](p){}
            child{node[Solid](n2){}}
            child{node[Solid](n3){}}
            }
        }
    child{
        child[missing]
        child{
            child[missing]
            child{node[Solid](n4){}}
            }
        };

\node[above] at (x) {$x$};
\node[below] at (n1) {$n_1$};
\node[below] at (n2) {$n_2$};
\node[below] at (n3) {$n_3$};
\node[below] at (n4) {$n_4$};
\node[left] at (r2) {$r_2$};
\node[right] at (p) {$p$};


\draw [->,thick] (1.25,-.75) -- ++(0:1.5)
    node[draw=none,fill=none,rectangle,above,midway]{$R_{r_1,p_2}^{x,n_1,n_2,p_1}$};
\draw [->,thick] (5.25,-.75) -- ++(0:1.5)
    node[draw=none,fill=none,rectangle,above,midway]{$R_{p_1,r_2}^{x,p_2,n_3,n_4}$};

\draw [->,thick] (.5,-2.5) -- ++(-45:1.2)
    node[draw=none,fill=none,rectangle,below left,midway]{$R_{p_1,p}^{r_1,n_2,n_3,n_4}$};
\draw [->,thick] (3.25,-4) -- ++(0:1.5)
    node[draw=none,fill=none,rectangle,above,midway]{$R_{r_1,r_2}^{x,n_1,p,n_4}$};
\draw [<-,thick] (7.75,-2.5) -- ++((-135:1.2)
    node[draw=none,fill=none,rectangle,below right,midway]{$R_{p,p_2}^{r_2,n_1,n_2,n_3}$};
\end{tikzpicture}
\end{center}
The corresponding identity for $q$-Bessel functions is obtained by substituting
\begin{gather*}
r_1-n_1=P, \quad p_1-p_2=Q, \quad n_4-r_2=R, \quad \nu=x-n_1, \\ \mu_1=n_2-p_1, \quad \mu_2 = n_1-p_2+n_3-n_4, \quad \mu=p-n_4.
\end{gather*}

The third identity is
\begin{center}
\begin{tikzpicture}

\node[Solid](x) at (-.5,0) {}
    child{
        child{node[Solid](p1){}
            child{node[Solid](n1){}}
            child{node[Solid](n2){}}
            }
        child[missing]
        }
    child{
        child[missing]
        child{node[Solid](p2){}
            child{node[Solid](n3){}}
            child{node[Solid](n4){}}
            }
        };

\node[above] at (x) {$x$};
\node[below] at (n1) {$n_1$};
\node[below] at (n2) {$n_2$};
\node[below] at (n3) {$n_3$};
\node[below] at (n4) {$n_4$};
\node[left] at (p1) {$p_1$};
\node[right] at (p2) {$p_2$};


\node[Solid](x) at (3,1.5) {}
    child{node[Solid](r){}
        child{node[Solid](p1){}
            child{node[Solid](n1){}}
            child{node[Solid](n2){}}
            }
        child{
            child[missing]
            child{node[Solid](n3){}}
            }
        }
    child{
        child[missing]
        child{
            child[missing]
            child{node[Solid](n4){}}
            }
        };

\node[above] at (x) {$x$};
\node[below] at (n1) {$n_1$};
\node[below] at (n2) {$n_2$};
\node[below] at (n3) {$n_3$};
\node[below] at (n4) {$n_4$};
\node[left] at (p1) {$p_1$};
\node[left] at (r) {$r$};


\node[Solid](x) at (7,1.5) {}
    child{node[Solid](r){}
        child{node[Solid](p3){}
            child{node[Solid](n1){}}
            child{node[Solid](n3){}}
            }
        child{
            child[missing]
            child{node[Solid](n2){}}
            }
        }
    child{
        child[missing]
        child{
            child[missing]
            child{node[Solid](n4){}}
            }
        };

\node[above] at (x) {$x$};
\node[below] at (n1) {$n_1$};
\node[below] at (n2) {$n_2$};
\node[below] at (n3) {$n_3$};
\node[below] at (n4) {$n_4$};
\node[left] at (p3) {$p_3$};
\node[left] at (r) {$r$};


\node[Solid](x) at (10.5,0) {}
    child{
        child{node[Solid](p3){}
            child{node[Solid](n1){}}
            child{node[Solid](n3){}}
            }
        child[missing]
        }
    child{
        child[missing]
        child{node[Solid](p4){}
            child{node[Solid](n2){}}
            child{node[Solid](n4){}}
            }
        };
\node[above] at (x) {$x$};
\node[below] at (n1) {$n_1$};
\node[below] at (n2) {$n_2$};
\node[below] at (n3) {$n_3$};
\node[below] at (n4) {$n_4$};
\node[left] at (p3) {$p_3$};
\node[right] at (p4) {$p_4$};


\node[Solid](x) at (3,-2.5) {}
    child{
        child{
            child{node[Solid](n1){}}
            child[missing]
            }
        child[missing]
        }
    child{node[Solid](r){}
        child{
            child{node[Solid](n2){}}
            child[missing]
            }
        child{node[Solid](p2){}
            child{node[Solid](n3){}}
            child{node[Solid](n4){}}
            }
        };
\node[above] at (x) {$x$};
\node[below] at (n1) {$n_1$};
\node[below] at (n2) {$n_2$};
\node[below] at (n3) {$n_3$};
\node[below] at (n4) {$n_4$};
\node[right] at (p2) {$p_2$};
\node[right] at (r) {$r$};


\node[Solid](x) at (7,-2.5) {}
    child{
        child{
            child{node[Solid](n1){}}
            child[missing]
            }
        child[missing]
        }
    child{node[Solid](r){}
        child{
            child{node[Solid](n3){}}
            child[missing]
            }
        child{node[Solid](p4){}
            child{node[Solid](n2){}}
            child{node[Solid](n4){}}
            }
        };
\node[above] at (x) {$x$};
\node[below] at (n1) {$n_1$};
\node[below] at (n2) {$n_2$};
\node[below] at (n3) {$n_3$};
\node[below] at (n4) {$n_4$};
\node[right] at (p4) {$p_4$};
\node[right] at (r) {$r$};


\draw [->,thick] (.5,0) -- ++((30:1.2)
    node[draw=none,fill=none,rectangle,above left]{$R_{p_2,r}^{x,p_1,n_3,n_4}$};
\draw [->,thick] (4.25,.8) -- ++((0:1.5)
    node[draw=none,fill=none,rectangle,above,midway]{$R_{p_3,p_1}^{r,n_2,n_1,n_3}$};
\draw [<-,thick] (9.5,0) -- ++((150:1.2)
    node[draw=none,fill=none,rectangle,above right]{$R_{p_4,r}^{x,p_3,n_2,n_4}$};
\draw [->,thick] (.5,-2.75) -- ++((-30:1.2)
    node[draw=none,fill=none,rectangle,below left,midway]{$R_{r,p_1}^{x,n_1,n_2,p_2}$};
\draw [->,thick] (4.25,-3.5) -- ++((0:1.5)
    node[draw=none,fill=none,rectangle,above,midway]{$R_{p_2,p_4}^{r,n_2,n_4,n_3}$};
\draw [<-,thick] (9.5,-2.75) -- ++((-150:1.2)
    node[draw=none,fill=none,rectangle,below right,midway]{$R_{r,p_3}^{x,n_1,n_3,p_4}$};

\end{tikzpicture}
\end{center}

\end{proof}

\begin{remark}\*
\begin{enumerate}[(i)]
\item The $q$-Hankel transform of a function $f \in L^2(q^\Z;q^x)$ is defined by
\[
(H_\nu f)(n) = \sum_{x \in \Z} f(q^x) J_\nu(q^{x+n};q) q^x , \qquad n \in \Z.
\]
Identity (i) of Theorem \ref{thm:identitiesJ} shows that the $q$-Hankel transform maps an orthogonal basis of $q$-Bessel functions to another orthogonal basis of $q$-Bessel functions, which implies a factorization of the $q$-Hankel transform: $H_{r_{123}} = H_{r_{312}} H_{r_{132}}$.
\item Identity (ii), the product formula for $q$-Bessel functions, has appeared before in the literature; representation theoretic proofs are given by Koelink in \cite[Cor.6.5]{Koe94}, and Kalnins, Miller and Mukherjee in \cite[(3.20)]{KalMilMu}. A direct analytic proof is given by Koelink and Swarttouw in \cite{KoeSw}.
\item It is well known that the hexagon identity for classical $6j$-symbols can be interpreted as a quantum Yang-Baxter equation. Here we obtain an infinite dimensional solution: for $u,v \in \Z$, define a unitary operator $\mathcal R(u,v): \ell^2(\Z)\tensor \ell^2(\Z) \to \ell^2(\Z)\tensor \ell^2(\Z)$ by
\[
\mathcal R(u,v) (e_{x-a} \tensor e_{b-x} ) = \sum_{y \in \Z} R_{x,y}^{u,a,v,b} \, e_{b-y} \tensor e_{y-a},\qquad a,b,x \in \Z,
\]
where $\{ e_x \mid x \in \Z\}$ is the standard orthonormal basis for $\ell^2(\Z)$. Then the hexagon identity says that the operator $\mathcal R$ satisfies
\[
\mathcal R_{12}(u,w) \mathcal R_{13}(v,w) \mathcal R_{23}(u,v) = \mathcal R_{23}(u,v) \mathcal R_{13}(v,w) \mathcal R_{12}(u,w)
\]
as an operator identity on $\ell^2(\Z) \tensor \ell^2(\Z) \tensor \ell^2(\Z)$.
\end{enumerate}
\end{remark}

\section{$3nj$-symbols and multivariate $q$-Bessel functions} \label{sec:3nj-symbols}
We consider certain $3nj$-symbols and show that these can be considered as multivariate $q$-Bessel functions, which are limits of the multivariate Askey-Wilson polynomials introduced by Gasper and Rahman in \cite{GR05}.
In this section we use the following notation. For $v = (v_1,v_2,\ldots,v_{d-1},v_{d})$ we define $|v|=\sum_{j=1}^d v_j$ and $\hat v = (v_d,v_{d-1},\ldots,v_2,v_1)$. For some function $f:\Z^d \to \C$ we set
\[
\sum_{x} f(x) = \sum_{x_d \in \Z} \cdots \sum_{x_1 \in \Z} f(x_1,\ldots,x_d),
\]
provided the sum converges.

\subsection{Multivariate $q$-Bessel functions}
Let $d \in \N_{\geq 1}$. For $\nu = (\nu_0,\ldots,\nu_{d+1}) \in \Z^{d+2}$, we define $q$-Bessel functions in the variables $x=(x_1,\ldots,x_d), \la=(\la_1,\ldots,\la_d) \in \Z^d$ by
\begin{equation} \label{eq:def multivariable Jnu}
J_\nu(x,\la) = \prod_{j=1}^d J_{\nu_j- x_{j+1} - \la_{j-1} }(q^{x_j-x_{j+1} + \la_j- \la_{j-1}};q),
\end{equation}
where $\la_0=\nu_0$ and $x_{d+1}=\nu_{d+1}$. Occasionally we will use the notation $J_\nu(x,\la;q)$ to stress the dependence on $q$.

\begin{thm} \label{thm:properties multivariable J}
The multivariate $q$-Bessel functions have the following properties:
\begin{enumerate}[(i)]
\item Orthogonality relations:
\[
\sum_{x} J_\nu(x,\la) J_\nu(x,\la') q^{x_1} = \de_{\la ,\la'} q^{\nu_{d+1}+\nu_0 - \la_d}, \qquad \la,\la' \in \Z^d.
\]
\item Self-duality: $J_\nu(x,\la) = J_{\hat \nu}(\hat \la,\hat x)$.
\end{enumerate}
\end{thm}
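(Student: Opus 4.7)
The plan is to verify (ii) by a direct reindexing of the defining product, and to prove (i) by telescoping applications of the one-variable $q$-Hankel orthogonality of Theorem \ref{thm:orthogonality}.

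For the self-duality (ii), I would substitute $k = d+1-j$ in
\[
J_{\hat\nu}(\hat\lambda,\hat x) = \prod_{j=1}^d J_{\hat\nu_j - \hat\lambda_{j+1} - \hat x_{j-1}}\bigl(q^{\hat\lambda_j - \hat\lambda_{j+1} + \hat x_j - \hat x_{j-1}};q\bigr),
\]
using $\hat\nu_i = \nu_{d+1-i}$, $\hat\lambda_i = \lambda_{d+1-i}$, $\hat x_i = x_{d+1-i}$. Under this reindexing the $j$-th factor becomes the $k$-th factor $J_{\nu_k - x_{k+1} - \lambda_{k-1}}(q^{x_k - x_{k+1} + \lambda_k - \lambda_{k-1}};q)$ of $J_\nu(x,\lambda)$. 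The two boundary conventions match up correctly: $\hat\lambda_{d+1} = \hat\nu_{d+1} = \nu_0$ aligns with $\lambda_0 = \nu_0$ (the case $j=d$, $k=1$), and $\hat x_0 = \hat\nu_0 = \nu_{d+1}$ aligns with $x_{d+1} = \nu_{d+1}$ (the case $j=1$, $k=d$).

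For the orthogonality (i), I would perform the sums over $x_1,x_2,\ldots,x_d$ sequentially. The crucial observation is that only the $j=1$ factor of each product depends on $x_1$, and the order of this $q$-Bessel function equals $\nu_1-x_2-\nu_0$ in both $J_\nu(x,\lambda)$ and $J_\nu(x,\lambda')$, because $\lambda_0=\lambda'_0=\nu_0$. Theorem \ref{thm:orthogonality} then gives
\[
\sum_{x_1 \in \Z} J_{\nu_1 - x_2 - \nu_0}\bigl(q^{x_1 - x_2 + \lambda_1 - \nu_0};q\bigr)\, J_{\nu_1 - x_2 - \nu_0}\bigl(q^{x_1 - x_2 + \lambda'_1 - \nu_0};q\bigr)\, q^{x_1} = \delta_{\lambda_1,\lambda'_1}\, q^{x_2 - \lambda_1 + \nu_0}.
\]
Once $\lambda_1 = \lambda'_1$ is forced by the Kronecker delta, the $j=2$ factors share the common Bessel order $\nu_2-x_3-\lambda_1$, and the fresh weight $q^{x_2}$ is exactly what the next application of Theorem \ref{thm:orthogonality} requires. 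Iterating this telescoping yields $\prod_{j=1}^d \delta_{\lambda_j,\lambda'_j}$ together with a cumulative power $q^{x_{d+1} - \lambda_d + \nu_0} = q^{\nu_{d+1} + \nu_0 - \lambda_d}$, which matches the right-hand side.

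The main obstacle is purely bookkeeping: at each stage one must check that the residue $q^{\lambda_{j-1}-\lambda_j+x_{j+1}}$ produced by the orthogonality combines with the leftover $q$-powers from the previous steps so that precisely $q^{x_{j+1}}$ remains as the weight for the next sum, with the $\lambda$-exponents telescoping to leave only $\nu_0-\lambda_d$ in the final exponent. Aside from this and the careful tracking of boundary conventions in (ii), no further $q$-series identities are needed beyond Theorem \ref{thm:orthogonality}.
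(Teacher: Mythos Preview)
Your proposal is correct and follows essentially the same approach as the paper: the self-duality is obtained directly from the definition by reindexing the product (the paper simply says it ``follows directly from \eqref{eq:def multivariable Jnu}''), and the orthogonality is proved by iterated application of the one-variable $q$-Hankel orthogonality of Theorem \ref{thm:orthogonality}, summing over $x_1,\ldots,x_d$ in turn. The paper packages the induction using the truncated products $J_\nu^{(k)}(x,\la)=\prod_{j=k}^d J_{\nu_j-x_{j+1}-\la_{j-1}}(q^{x_j-x_{j+1}+\la_j-\la_{j-1}};q)$, but the telescoping of the $q$-exponents is exactly as you describe.
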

\begin{proof}
The self-duality property follows directly from \eqref{eq:def multivariable Jnu}.
The orthogonality relations follow by induction using the $q$-Hankel orthogonality relations from Theorem \ref{thm:orthogonality}, which can be written as
\begin{multline} \label{eq:orthogonality2}
\sum_{x_j \in \Z} J_{\nu_j - x_{j+1} - \la_{j-1}} (q^{x_j - x_{j+1} + \la_j - \la_{j-1}};q) J_{\nu_j - x_{j+1} - \la_{j-1}} (q^{x_j - x_{j+1} + \la_j' - \la_{j-1}};q) q^{x_j}\\
 = \de_{\la_j,\la_j'} q^{x_{j+1}-\la_j+\la_{j-1}}.
\end{multline}
Define for $k=1,\ldots,d+1$,
\[
J_\nu^{(k)}(x,\la) = \prod_{j=k}^d J_{\nu_j- x_{j+1} - \la_{j-1} }(q^{\frac12(x_j-x_{j+1} + \la_j- \la_{j-1})};q),
\]
the empty product being equal to $1$. Note that $J_\nu^{(1)} = J_\nu$ and
\begin{equation} \label{eq:k+1 -> k}
J_{\nu_{k}-x_{k+1}-\la_{k-1}}(q^{x_{k} - x_{k+1} + \la_{k}-\la_{k-1}};q) J_\nu^{(k+1)}(x,\la) = J_\nu^{(k)}(x,\la).
\end{equation}
We will show that
\begin{equation} \label{eq:IH}
\sum_{x_k \in \Z} \cdots \sum_{x_1 \in \Z} J_\nu(x,\la) J_\nu(x,\la') q^{x_1} = \de_{\la_1,\la_1'}  \cdots \de_{\la_k,\la_k'} q^{x_{k+1} - \la_k + \la_0} \, J_\nu^{(k+1)}(x,\la) J_\nu^{(k+1)}(x,\la').
\end{equation}
For $k=1$ \eqref{eq:IH} follows directly from \eqref{eq:orthogonality2}.
Now assume that \eqref{eq:IH} holds for a certain $k$, then by \eqref{eq:orthogonality2} and \eqref{eq:k+1 -> k},
\[
\begin{split}
\sum_{x_{k+1} \in \Z} \cdots \sum_{x_1 \in \Z} & J_\nu(x,\la) J_\nu(x,\la') q^{x_1} \\
& = \de_{\la_1,\la_1'}  \cdots \de_{\la_k,\la_k'} \sum_{x_{k+1} \in \Z} J_\nu^{(k+1)}(x,\la) J_\nu^{(k+1)}(x,\la')q^{x_{k+1} - \la_k + \la_0} \\
& = \de_{\la_1,\la_1'}  \cdots \de_{\la_{k+1},\la_{k+1}'}J_\nu^{(k+2)}(x,\la) J_\nu^{(k+2)}(x,\la')q^{x_{k+2} - \la_{k+1} + \la_0},
\end{split}
\]
which proves the orthogonality relations.
\end{proof}

Next we show that the multivariate $q$-Bessel functions can be considered as limit cases of multivariate Askey-Wilson polynomials.
The 1-variable Askey-Wilson polynomials are defined by
\[
p_n(x;a,b,c,d \,|\, q) = \frac{(ab,ac,ad;q)_n }{a^n} \rphis{4}{3}{q^{-n}, abcdq^{n-1}, ax, a/x }{ab, ac, ad}{q,q},
\]
which are polynomials in $x+x^{-1}$ of degree $n$, and they are symmetric in the parameters $a,b,c,d$. Using notation as in \cite{Il} the multivariable Askey-Wilson polynomials are defined as follows. Let $n = (n_1,\ldots,n_d) \in \N^d$ and $x=(x_1,\ldots,x_d) \in (\C^\times)^d$, then the $d$-variable Askey-Wilson polynomials are defined by
\begin{equation} \label{eq:d-var AWpol}
P_d(n;x;\al\,|\,  q) = \prod_{j=1}^d p_{n_j}\left(x_j;\al_{j} q^{N_{j-1}}, \frac{ \al_j}{\al_0^2} q^{N_{j-1}}, \frac{\al_{j+1}}{\al_j} x_{j+1}, \frac{\al_{j+1}}{\al_j} x_{j+1}^{-1} \,|\, q\right),
\end{equation}
where $N_j=\sum_{k=1}^j n_k$, $N_0=0$, $\al=(\al_0,\ldots, \al_{d+2}) \in \C^{d+3}$, $x_{d+1}=\al_{d+2}$. These are polynomials in the variables $x_1+x_1^{-1}, \ldots, x_d+x_d^{-1}$ of degree $|n|=N_d$.

\begin{prop}
Let $\la=(\la_1,\ldots,\la_d) \in \Z^d$, $\nu=(\nu_0,\ldots,\nu_{d+1}) \in \Z^{d+2}$ and define
\[
\begin{aligned}
\al(m) &= \left( q^{-m}, q^{\frac12\nu_0}, q^{\frac12\nu_1-m},\ldots,  q^{\frac12\nu_j-jm},\ldots, q^{\frac12\nu_{d}-dm}, q^{\nu_{d+1}+m} \right) \in \C^{d+3},\\
x(m) &= \left( q^{-\frac12\nu_0-x_1+m}, q^{\frac12\nu_1-\nu_0-x_2+m}, \ldots, q^{\frac12\nu_{d-1}-\nu_0-x_d+m} \right)\in \C^d, \\
 \la+m &= (\la_1+m, \ldots, \la_d+m) \in \N^{d},\\
C_m(x;\la;\al) &= \prod_{j=1}^d  q^{(\frac12\nu_{j-1}-\nu_j+\nu_0+x_{j+1}-m)(\la_j+m)}  \left( q^{\nu_j-\nu_{j-1}-2m} ;q\right)_{\la_j+m},
\end{aligned}
\]
then
\[
\lim_{m \to \infty} \frac{  P_d(\la+m;x(m);\al(m)\,|\, q)}{C_m(x;\la;\al)} = (q;q)_\infty^d \left( \prod_{j=1}^d q^{\frac12(x_{j+1}-x_{j}+\La_{j+1}-\La_{j})(\nu_j-x_{j+1}-\La_{j-1})} \right) J_\nu(x,\La),
\]
where $\La=(\La_1,\ldots,\La_d)$ with $\La_j=\nu_0-\sum_{k=1}^{j} \la_k$ and $\La_0=\nu_0$.
\end{prop}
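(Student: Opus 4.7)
The plan is to reduce the multivariable limit to a product of one-variable limits, one for each $j=1,\ldots,d$. The Askey--Wilson polynomial $P_d(\la+m;x(m);\al(m)|q)$, the normalization $C_m(x;\la;\al)$, and the target $J_\nu(x,\La)$ (together with its explicit prefactor) are all products indexed by $j$; moreover the assignments $\al(m)$ and $x(m)$ are engineered so that the $j$-th Askey--Wilson factor depends only on $\al_0,\al_j,\al_{j+1},N_{j-1}$ and $x_j(m),x_{j+1}(m)$, i.e.\ on exactly those parameters entering the $j$-th factor of the right-hand side. Writing $A_j=\nu_j-x_{j+1}-\La_{j-1}$ and $Y_j=x_j-x_{j+1}+\La_j-\La_{j-1}$, the proposition thus follows from the one-variable statement
\[
\lim_{m\to\infty}\frac{p_{\la_j+m}(x_j(m);a_j,b_j,c_j,d_j\,|\,q)}{q^{(\frac12\nu_{j-1}-\nu_j+\nu_0+x_{j+1}-m)(\la_j+m)}\,(q^{\nu_j-\nu_{j-1}-2m};q)_{\la_j+m}}=(q;q)_\infty\,q^{\frac12(x_{j+1}-x_j+\La_{j+1}-\La_j)A_j}\,J_{A_j}(q^{Y_j};q),
\]
where $a_j,b_j,c_j,d_j$ are the four Askey--Wilson parameters read off from the product formula defining $P_d$.

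For the one-variable limit I would expand
\[
p_n(z;a,b,c,d\,|\,q)=\frac{(ab,ac,ad;q)_n}{a^n}\,{}_4\varphi_3\!\left(\begin{matrix}q^{-n},abcdq^{n-1},az,a/z\\ ab,ac,ad\end{matrix};q,q\right)
\]
with $z=x_j(m)$ and $n=\la_j+m$, substitute the explicit parameter values, and track each $q$-Pochhammer symbol individually as $m\to\infty$. Direct inspection shows that $a_j$ and $c_j$ stay bounded, that $b_j,ab,az,abcd\,q^{n-1}\to 0$, and that $d_j,a/z,q^{-n},ad\to\infty$. Applying the standard asymptotics $(X;q)_\ell\to 1$ for $X\to 0$ and $(X;q)_\ell\sim(-X)^\ell q^{\binom{\ell}{2}}$ for $X\to\infty$ termwise, the vanishing factors trivialise, the bounded factor $(ac;q)_n=(q^{A_j};q)_n$ tends to $(q^{A_j};q)_\infty$, and the diverging ones contribute explicit powers of their arguments. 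The powers of $a$ cancel against $a^{-n}$; the divergent contributions from $(ad;q)_n,(a/z;q)_k,(q^{-n};q)_k$, together with the Gauss-type exponents $\binom{n}{2},\binom{k}{2},-nk$, combine into a single scalar whose $q$-exponent matches the denominator in the displayed formula up to the residual factor $q^{\frac12(x_{j+1}-x_j+\La_{j+1}-\La_j)A_j}$. After absorbing the single $(1-q^{A_j})$-factor from $(q^{A_j};q)_\infty$ into the shift $A_j\mapsto A_j+1$ of the denominator parameter, the surviving sum $\sum_k\frac{(-1)^k q^{\frac12 k(k+1)+kY_j}}{(q;q)_k\,(q^{A_j+1};q)_k}$ is exactly the ${}_1\varphi_1$-series defining $J_{A_j}(q^{Y_j};q)$.

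The principal obstacle is the extensive exponent bookkeeping: individual contributions scale like $q^{\pm mn}$ or $q^{\pm m^2}$, so a finite limit appears only after the divergent pieces cancel exactly. The normalization $C_m$ is engineered precisely to absorb this growth, and verifying the cancellation reduces to a single lengthy but elementary identity among exponents of $q$. The boundary index $j=d$ requires separate attention because $x_{d+1}(m)=\al_{d+2}(m)=q^{\nu_{d+1}+m}$ has a different $m$-scaling than $x_{j+1}(m)$ for $j<d$, so the parameters $c_d$ and $d_d$ take slightly different explicit forms; nevertheless the same asymptotic mechanism applies and yields the same limiting formula.
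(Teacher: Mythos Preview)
Your approach is essentially the same as the paper's: factor the multivariate Askey--Wilson polynomial into one-variable Askey--Wilson factors and pass to the limit termwise in each ${}_4\varphi_3$, obtaining a ${}_1\varphi_1$ that is identified with the $q$-Bessel function. The paper carries out the substitution in two stages (first inserting the $m$-dependence with generic $\al_j,x_j$, taking the limit, and only then specializing to the $\nu$- and $x$-values), whereas you do it in one pass; and you are more explicit than the paper about how the prefactor $(ab,ac,ad;q)_n/a^n$ is absorbed into the normalization $C_m$, which the paper leaves implicit. Your caution about the boundary case $j=d$ is slightly overstated: $x_{d+1}(m)=q^{\nu_{d+1}+m}$ has the same $q^m$-scaling as the other $x_{j+1}(m)$, so no genuinely new phenomenon occurs there.
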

\begin{proof}
First we substitute
\begin{align*}
\al_0 &\mapsto q^{-m}, & n_j &\mapsto \la_j + m, && j=1,\ldots,d \\
x_j &\mapsto x_j q^m, & \al_j &\mapsto \al_j q^{m(j-1)}, && j=1,\ldots,d+1,
\end{align*}
in \eqref{eq:d-var AWpol} (recall, $x_{d+1}=\al_{d+2}$). The $_4\varphi_3$-part of the $j$-th factor $p_{n_j}$ is
\[
\begin{split}
\rphis{4}{3}{q^{-\la_j-m}, \al_{j+1}^2 q^{2(\sum_{k=1}^{j-1}\la_{k})+\la_j-1+m}, \frac{\al_{j+1}}{\al_j} x_{j+1}x_j q^{m},  \frac{\al_{j+1}x_{j+1}}{\al_j x_j}q^{-m} } { \frac{\al_{j+1}^2}{\al_j^2}q^{-2m}, \al_{j+1}x_{j+1}q^{2m+\sum_{k=1}^{j-1}\la_k}, \al_{j+1}x_{j+1}q^{\sum_{k=1}^{j-1}\la_k} }{q,q},
\end{split}
\]
where the empty sum equals $0$. Letting $m \to \infty$ this function tends to
\[
\rphis{1}{1}{0}{\al_{j+1}x_{j+1}q^{\sum_{k=1}^{j-1}\la_k}}{q, \frac{\al_j x_{j+1}}{\al_{j+1}x_j}q^{1-\la_j} }.
\]
Finally we substitute
\[
\al_j \mapsto q^{\frac 12 \nu_{j-1}}, \quad x_j \mapsto q^{\frac12\nu_{j-1}-\nu_0 - x_j},
\]
and set $\nu_0-\sum_{k=1}^{j} \la_k = \La_j$ for $j=0,\ldots,d$, then we have
\[
\rphis{1}{1}{0}{q^{\nu_j - x_{j+1} - \La_{j-1}}}{q, q^{x_{j+1}-x_{j}+\La_{j} - \La_{j-1}}},
\]
which we recognize as the $_1\varphi_1$-part of the $j$-th factor of the multivariate $q$-Bessel function $J_\nu(x,\La)$, see \eqref{eq:def multivariable Jnu}.
\end{proof}

\subsection{$3nj$-symbols}
Let $k \in \N_{\geq 1}$, and let $\br,\bs \in \Z^{k}$, $\bn \in \Z^{k+2}$. We define the $3nj$-symbols $R_{\br,\bs}^{x;\bn}$ to be the coupling coefficients between two specific binary trees corresponding to $(k+2)$-fold tensor product representations. We will use the following notation:
\[
\begin{tikzpicture}
[baseline={([yshift=-.5ex]current bounding box.center)},
level distance=.8cm,
sibling distance=.8cm]
\node[Solid](x) {}
    child{
        child{
            child{
                child{node[Solid](n1) {}}
                child[missing]
                }
            child[missing]
            }
        child[missing]
        }
    child{node[Solid](r1){}
        child{
            child{
                child{node[Solid](n2){}}
                child[missing]
                }
            child[missing]
            }
        child{
            edge from parent [dashed]
            child[missing]
            child{node[Solid](rk){}
                child{node[Solid](nk+1){}
                    edge from parent [solid]}
                child{node[Solid](nk+2){}
                    edge from parent [solid]}
                }
            }
        };

\node[above] at (x) {$x$};
\node[below] at (n1) {$n_1$};
\node[below] at (n2) {$n_2$};
\node[below] at (nk+1) {$n_{k+1}$};
\node[below] at (nk+2) {$n_{k+2}$};
\node[right] at (r1) {$r_1$};
\node[right] at (rk) {$r_k$};

\node [right=.4cm of n2] {$\cdots$};

\end{tikzpicture}
=
\begin{tikzpicture}
[baseline={([yshift=-.5ex]current bounding box.center)}]
\node(x)[Solid]{}
    child{
        child{node[Solid](n1){}}
        child[missing]
        }
    child{node[Solid](r){}
        child{node[Solid]{}}
        child{node[Solid](n3){}}
        };
\node [above] at (x) {$x$};
\node [right] at (r) {$\br$};
\draw [thick,decorate,decoration={brace,mirror,amplitude=5pt,raise=4pt}] (n1) -- (n3)
        node[midway,below=8pt]{$\bn$};
\end{tikzpicture}
\qquad \qquad
\begin{tikzpicture}
[baseline={([yshift=-.5ex]current bounding box.center)},
level distance=.8cm,
sibling distance=.8cm]
\node[Solid](x) {}
    child{node[Solid](sk){}
        child{
            edge from parent [dashed]
            child{node[Solid](s1){}
                child{node[Solid](n1){}
                    edge from parent [solid]}
                child{node[Solid](n2){}
                    edge from parent [solid]}
                }
            child[missing]
            }
        child{
            child[missing]
            child{
                child[missing]
                child{node[Solid](nk+1){}}
                }
            }
        }
    child{
        child[missing]
        child{
            child[missing]
            child{
                child[missing]
                child{node[Solid](nk+2){}}
                }
            }
        };
\node[above] at (x) {$x$};
\node[below] at (n1) {$n_1$};
\node[below] at (n2) {$n_2$};
\node[below] at (nk+1) {$n_{k+1}$};
\node[below] at (nk+2) {$n_{k+2}$};
\node[left] at (s1) {$s_1$};
\node[left] at (sk) {$s_k$};

\node [right=.4cm of n2] {$\cdots$};
\end{tikzpicture}
=
\begin{tikzpicture}
[baseline={([yshift=-.5ex]current bounding box.center)}]
\node[Solid](x){}
    child{node[Solid](s){}
        child{node[Solid](n1){}}
        child{node[Solid]{}}
        }
    child{
        child[missing]
        child{node[Solid](n3){}}
        };
\node[above] at (x) {$x$};
\node[left] at (s) {$\bs$};
\draw [thick,decorate,decoration={brace,mirror,amplitude=5pt,raise=4pt}] (n1) -- (n3)
        node[midway,below=8pt]{$\bn$};
\end{tikzpicture}
\]
Note that a node with a bold symbol represents several nodes, and that that the label $\br$ (respectively $\bs$) on the right (left) of a node means that all branches `hang' on the right (left) edge. The $3nj$-symbols $R_{\br,\bs}^{x,\bn}$ are defined by
\[
\begin{tikzpicture}
[baseline={([yshift=-.5ex]current bounding box.center)}]
\node[Solid](x){}
    child{
        child{node[Solid](n1){}}
        child[missing]
        }
    child{node[Solid](r){}
        child{node[Solid]{}}
        child{node[Solid](n3){}}
        };
\node[above] at (x) {$x$};
\node[right] at (r) {$\br$};
\draw [thick,decorate,decoration={brace,mirror,amplitude=5pt,raise=4pt}] (n1) -- (n3)
        node[midway,below=8pt]{$\bn$};
\end{tikzpicture}
= \sum_{\bs} R_{\br,\bs}^{x,\bn} \
\begin{tikzpicture}
[baseline={([yshift=-.5ex]current bounding box.center)}]
\node[Solid](x){}
    child{node[Solid](s){}
        child{node[Solid](n1){}}
        child{node[Solid]{}}
        }
    child{
        child[missing]
        child{node[Solid](n3){}}
        };
\node[above] at (x) {$x$};
\node[left] at (s) {$\bs$};
\draw [thick,decorate,decoration={brace,mirror,amplitude=5pt,raise=4pt}] (n1) -- (n3)
        node[midway,below=8pt]{$\bn$};
\end{tikzpicture}
\]
and we will denote the corresponding transition again by an arrow. Note that for $k=1$ we have $R_{r,s}^{x,\bn}=R_{r,s}^{x,n_1,n_2,n_3}$.
\begin{prop} \label{prop:properties R multivariate}
The coefficients $R_{\br,\bs}^{x,\bn}$ have the following properties:
\begin{enumerate}[(i)]
\item Orthogonality relations: $\displaystyle \sum_{\br} R_{\br,\bs}^{x,\bn} R_{\br,\bs'}^{x,\bn} = \de_{\bs,\bs'}$
\item Duality: $R_{\br,\bs}^{x,\bn} = R_{\hat \bs,\hat \br}^{x,\hat\bn}$.
\end{enumerate}
\end{prop}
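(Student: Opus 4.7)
The approach reduces both statements to known properties of single $6j$-symbols via the binary-tree formalism from Section~\ref{sec:6j-symbols}.

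For (i), I would realize the global right-comb $\to$ left-comb transition as a finite composition of elementary $6j$-moves. Each such move applies a transition of the form~\eqref{eq:R=J} at one internal node, locally converting a three-leaf subtree from right-comb to left-comb shape with a single $6j$-factor summed over the new internal label. Iterating yields $R_{\br,\bs}^{x,\bn}$ as a finite sum of products of single $6j$-symbols. Each elementary $6j$-matrix is orthogonal by Proposition~\ref{prop:properties R}(i), or equivalently by the $q$-Hankel orthogonality of Theorem~\ref{thm:orthogonality} combined with~\eqref{eq:R=J}; since compositions of orthogonal matrices are orthogonal, the matrix $[R_{\br,\bs}^{x,\bn}]$ is orthogonal. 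Because the entries are real (by Proposition~\ref{prop:R=qBessel} and induction), this is exactly the asserted orthogonality relation.

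For (ii), I would apply the tree flip~\eqref{eq:tree flip} at every internal node of each tree; geometrically this is a left-to-right mirror of the tree. A careful check shows that this carries a right-comb tree with labels $(\br,\bn)$ to a left-comb tree with labels $(\hat\br,\hat\bn)$, and dually a left-comb tree with labels $(\bs,\bn)$ to a right-comb tree with labels $(\hat\bs,\hat\bn)$. The coupling relation defining $R_{\br,\bs}^{x,\bn}$ thus flips to a coupling relation between the reversed trees; re-indexing the sum by $\bs \mapsto \hat\bs$ and inverting (the transition matrix being orthogonal by (i)) one reads off the identity $R_{\br,\bs}^{x,\bn}=R_{\hat\bs,\hat\br}^{x,\hat\bn}$. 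The main obstacle is the combinatorial bookkeeping in (ii): one has to verify carefully that iterating~\eqref{eq:tree flip} simultaneously reverses the leaves $\bn \mapsto \hat\bn$ and the internal labels $\br \mapsto \hat\br$ while swapping the comb orientation, given that $\br$ is listed top-down and $\bs$ bottom-up. A cleaner alternative is induction on $k$: the base case $k=1$ is the duality of Proposition~\ref{prop:properties R}(v), and the inductive step uses the elementary-$6j$ decomposition of (i) together with the base case applied factor by factor.
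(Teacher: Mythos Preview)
Your proposal is correct, and for part (ii) it coincides with the paper's argument: the paper simply records the tree identity
\[
\text{right-comb }(x,\br,\bn)\;=\;\text{left-comb }(x,\hat\br,\hat\bn),
\]
obtained by iterating \eqref{eq:tree flip}, and reads off duality from it exactly as you describe (your remark about needing orthogonality to pass to the inverse relation is implicit in the paper, which works with inner products of orthonormal basis vectors).

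For part (i) your route differs slightly from the paper's. The paper does not factor $R_{\br,\bs}^{x,\bn}$ into elementary $6j$-moves at this point; it observes directly that the two comb trees index two orthonormal bases of the same representation space, so the change-of-basis matrix is unitary, and orthogonality is immediate. Your approach---factor the global move as a composition of elementary $6j$-transitions and invoke Proposition~\ref{prop:properties R}(i) (or Theorem~\ref{thm:orthogonality}) at each step---is also valid, and it has the mild advantage of being self-contained at the level of $q$-Bessel identities without appealing to the underlying Hilbert space. The paper's argument is shorter and more conceptual; yours anticipates the product decomposition proved in the theorem that follows and would let one deduce (i) purely from the single-variable $q$-Hankel orthogonality.
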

\begin{proof}
The coefficients $R$ are the matrix coefficients of a unitary operator, which implies the orthogonality relations. The duality property is a consequence of the identity
\[
\begin{tikzpicture}
[baseline={([yshift=-.5ex]current bounding box.center)}]
\node[Solid](x){}
    child{
        child{node[Solid](n1){}}
        child[missing]
        }
    child{node[Solid](r){}
        child{node[Solid]{}}
        child{node[Solid](n3){}}
        };
\node[above] at (x) {$x$};
\node[right] at (r) {$\br$};
\draw [thick,decorate,decoration={brace,mirror,amplitude=5pt,raise=4pt}] (n1) -- (n3)
        node[midway,below=8pt]{$\bn$};
\end{tikzpicture} =
\begin{tikzpicture}
[baseline={([yshift=-.5ex]current bounding box.center)}]
\node[Solid](x){}
    child{node[Solid](r){}
        child{node[Solid](n1){}}
        child{node[Solid]{}}
        }
    child{
        child[missing]
        child{node[Solid](n3){}}
        };
\node[above] at (x) {$x$};
\node[left] at (r) {$\hat{\br}$};
\draw [thick,decorate,decoration={brace,mirror,amplitude=5pt,raise=4pt}] (n1) -- (n3)
        node[midway,below=8pt]{$\hat\bn$};
\end{tikzpicture}
\]
which follows from repeated application of \eqref{eq:tree flip}.
\end{proof}

\begin{thm}
For $i=1,2$ let $k_i \in \N_{\geq 1}$, $\bn_i \in \Z^{k_i+1}$ and $\br_i,\bs_i \in \Z^{k_i}$. Let $k=k_1+k_2$, $\bn=(\bn_1,\bn_2)$, $\br=(\br_1,\br_2)$, $\bs=(\bs_1,\bs_2)$, then
\[
R_{\br,\bs}^{x,\bn} = R_{\br_1,\bs_1}^{x,(\bn_1,r_{k_1+1})} R_{\br_2,\bs_2}^{x,(s_{k_1},\bn_2)}.
\]
As a consequence,
\[
R_{\br,\bs}^{x,\bn} = \prod_{j=1}^{k} R_{r_j,s_j}^{x,s_{j-1},n_{j+1},r_{j+1}},
\]
where $s_0=n_1$ and $r_{k+1}=n_{k+2}$.
\end{thm}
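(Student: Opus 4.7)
The plan is to prove the factorization diagrammatically by inserting an intermediate ``middle-split'' tree at position $k_1$ and showing that the transition from the right-leaning tree to the left-leaning tree decomposes into two independent basis changes on disjoint portions.

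Concretely, I would introduce the binary tree $T^{\mathrm{mid}}$ whose root $x$ has as left child the root $s_{k_1}$ of a left-leaning sub-tree on leaves $n_1,\ldots,n_{k_1+1}$ (with spine labels $s_1,\ldots,s_{k_1}$), and as right child the root $r_{k_1+1}$ of a right-leaning sub-tree on leaves $n_{k_1+2},\ldots,n_{k+2}$ (with spine labels $r_{k_1+1},\ldots,r_k$). The transition from the starting right-leaning tree to $T^{\mathrm{mid}}$ leaves the right sub-tree untouched and rearranges only the upper portion; collapsing this right sub-tree into a single pseudo-leaf labeled $r_{k_1+1}$ identifies the transition with the $3nj$-transformation on the $(k_1+2)$-leaved tree with leaves $(\bn_1,r_{k_1+1})$ and coupling $\br_1\to\bs_1$, contributing by definition the factor $R^{x,(\bn_1,r_{k_1+1})}_{\br_1,\bs_1}$. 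Symmetrically, the transition from $T^{\mathrm{mid}}$ to the target left-leaning tree leaves the left sub-tree fixed and rearranges only the lower portion; collapsing the left sub-tree into a pseudo-leaf labeled $s_{k_1}$ yields the factor $R^{x,(s_{k_1},\bn_2)}_{\br_2,\bs_2}$. Composing the two transitions and comparing with the defining expansion of $R^{x,\bn}_{\br,\bs}$ gives the first displayed identity.

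The ``collapse'' step is justified by the fact that the eigenvector of $\pi(\ga\ga^*)$ associated with a sub-tree is itself an eigenvector with eigenvalue determined only by the label at the sub-tree's root; thus the Clebsch--Gordan coefficients coupling two sub-trees at a node depend only on the root labels and not on any internal data, so the transformation on the upper half is genuinely independent of the internal structure of the lower half, and vice versa. The second (product) identity then follows by iterating the factorization with $k_1=1$: each application peels off a single $6j$-symbol $R^{x,s_{j-1},n_{j+1},r_{j+1}}_{r_j,s_j}$ from the left, and after $k-1$ iterations one obtains the product over $j=1,\ldots,k$ with the boundary conventions $s_0=n_1$ and $r_{k+1}=n_{k+2}$.

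The main obstacle is making the ``collapse'' step precise — one must carefully relate the three orthonormal bases attached to the right-leaning tree, $T^{\mathrm{mid}}$, and the left-leaning tree via the Clebsch--Gordan coefficients coming from the coproduct $\De$ — but this is a routine unwinding of the constructions in Section \ref{sec:6j-symbols} once the diagrammatic picture is in hand.
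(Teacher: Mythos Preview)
Your proposal is correct and follows essentially the same approach as the paper: the paper's proof is precisely the diagrammatic argument you describe, inserting the intermediate ``middle-split'' tree with a left-leaning $\bs_1$-subtree on leaves $\bn_1$ and a right-leaning $\br_2$-subtree on leaves $\bn_2$, so that the two transitions contribute $R_{\br_1,\bs_1}^{x,(\bn_1,r_{k_1+1})}$ and $R_{\br_2,\bs_2}^{x,(s_{k_1},\bn_2)}$ respectively, and then obtaining the product formula by repeated application. Your discussion of the ``collapse'' step is more explicit than the paper's (which leaves this implicit in the tree formalism), but the underlying argument is identical.
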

\begin{proof}
The first identity follows from
\[
\begin{tikzpicture}
[baseline={([yshift=-.5ex]current bounding box.center)}]

\begin{scope}[xshift=-110pt]
\node[Solid](x){}
    child{
        child{
            child{node[Solid](n1){}}
            child[missing]
            }
        child[missing]
        }
    child{node[Solid](r1){}
        child{
            child{node[Solid](n2){}}
            child[missing]
        }
        child{node[Solid](r2){}
            child[thin]{node[Solid](n3){}}
            child[thin]{node[Solid](n4){}}
            }
        };
\node[above] at (x) {$x$};
\node[right] at (r2) {$\br_2$};
\node[right] at (r1) {$\br_1$};
\draw [thick,decorate,decoration={brace,mirror,amplitude=3pt,raise=4pt}] (n1) -- (n2)
        node[midway,below=8pt]{$\bn_1$};
\draw [thick,decorate,decoration={brace,mirror,amplitude=3pt,raise=4pt}] (n3) -- (n4)
        node[midway,below=8pt]{$\bn_2$};

\draw [->,thick] ($(n4)+(.25,1)$) -- ++(0:1.5)
            node[midway,above]{$R_{\br_1,\bs_1}^{x,(\bn_1,r_{k_1+1})}$};
\end{scope}

\begin{scope}
\node[Solid](x){}
    child{
        child{node[Solid](s1){}
            child{node[Solid](n1){}}
            child{node[Solid](n2){}}
            }
        child[missing]
        }
    child{
        child[missing]
        child{node[Solid](r2){}
            child{node[Solid](n3){}}
            child{node[Solid](n4){}}
            }
        };
\node[above] at (x) {$x$};
\node[right] at (r2) {$\br_2$};
\node[left] at (s1) {$\bs_1$};
\draw [thick,decorate,decoration={brace,mirror,amplitude=3pt,raise=4pt}] (n1) -- (n2)
        node[midway,below=8pt]{$\bn_1$};
\draw [thick,decorate,decoration={brace,mirror,amplitude=3pt,raise=4pt}] (n3) -- (n4)
        node[midway,below=8pt]{$\bn_2$};

\draw [->,thick] ($(n4)+(.25,1)$) -- ++(0:1.5)
            node[midway,above]{$R_{\br_2,\bs_2}^{x,(s_{k_1},\bn_2)}$};
\end{scope}

\begin{scope}[xshift=110pt]
\node[Solid](x){}
    child{node[Solid](s2){}
        child{node[Solid](s1){}
            child[thin]{node[Solid](n1){}}
            child[thin]{node[Solid](n2){}}
            }
        child{
            child[missing]
            child{node[Solid](n3){}}
            }
        }
    child{
        child[missing]
        child{
            child[missing]
            child{node[Solid](n4){}}
            }
        };
\node[above] at (x) {$x$};
\node[left] at (s1) {$\bs_1$};
\node[left] at (s2) {$\bs_2$};
\draw [thick,decorate,decoration={brace,mirror,amplitude=3pt,raise=4pt}] (n1) -- (n2)
        node[midway,below=8pt]{$\bn_1$};
\draw [thick,decorate,decoration={brace,mirror,amplitude=3pt,raise=4pt}] (n3) -- (n4)
        node[midway,below=8pt]{$\bn_2$};
\end{scope}

\end{tikzpicture}
\]
The second identity follows from repeated application of the first identity.
\end{proof}
From \eqref{eq:R=J} it follows that $R_{\br,\bs}^{x,\bn}$ is essentially a multivariate $q$-Bessel function as defined by \eqref{eq:def multivariable Jnu}.
\begin{cor}
Let $\bnu(x,\bn) = (n_1,x+n_2,\ldots,x+n_{k+1},n_{k+2})$, then
\[
R_{\br,\bs}^{x,\bn} = (-q)^{r_1+s_k-n_1-n_{k+2}} J_{\bnu(x,\bn)}(\br,\bs;q^2).
\]
\end{cor}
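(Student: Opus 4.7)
The plan is to combine the multiplicative factorization from the preceding theorem with the single-variable identification \eqref{eq:R=J}, and then read the outcome off against the definition \eqref{eq:def multivariable Jnu} of the multivariate $q$-Bessel function. The argument splits naturally into three bookkeeping steps: plug-in, telescoping of the $(-q)$-prefactor, and matching conventions.

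First I would apply
\[
R_{\br,\bs}^{x,\bn} = \prod_{j=1}^{k} R_{r_j,s_j}^{x,s_{j-1},n_{j+1},r_{j+1}}
\]
with $s_0=n_1$ and $r_{k+1}=n_{k+2}$, and substitute \eqref{eq:R=J} into each factor. This rewrites the $j$-th factor as
\[
(-q)^{r_j+s_j-s_{j-1}-r_{j+1}}\, J_{x - s_{j-1} + n_{j+1} - r_{j+1}}\bigl(q^{2(r_j+s_j-s_{j-1}-r_{j+1})};q^2\bigr).
\]

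Next, the prefactors are assembled by a telescoping sum:
\[
\sum_{j=1}^k \bigl(r_j + s_j - s_{j-1} - r_{j+1}\bigr) = (r_1 - r_{k+1}) + (s_k - s_0) = r_1 + s_k - n_1 - n_{k+2},
\]
which produces the claimed overall power of $-q$.

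Finally, with $\bnu = \bnu(x,\bn) = (n_1,x+n_2,\ldots,x+n_{k+1},n_{k+2})$ one has $\nu_0 = n_1$ and $\nu_{k+1} = n_{k+2}$, so the boundary conventions $\la_0=\nu_0$ and $x_{k+1}=\nu_{k+1}$ built into \eqref{eq:def multivariable Jnu} coincide with the conventions $s_0=n_1$ and $r_{k+1}=n_{k+2}$ used above. Reading \eqref{eq:def multivariable Jnu} with $q$ replaced by $q^2$, variables $x=\br$, $\la=\bs$, and $\nu=\bnu$, its $j$-th factor becomes $J_{(x+n_{j+1})-r_{j+1}-s_{j-1}}\bigl(q^{2(r_j-r_{j+1}+s_j-s_{j-1})};q^2\bigr)$, which is exactly the $q$-Bessel factor produced above. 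Multiplying over $j$ then yields the stated identity. The main obstacle, such as it is, is purely notational: carefully lining up the two sets of boundary conventions and the sign bookkeeping in the telescoping sum; once that is done, the proof is a direct verification using results already established.
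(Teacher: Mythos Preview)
Your proof is correct and follows exactly the approach the paper intends: the paper's own argument is the single sentence ``From \eqref{eq:R=J} it follows that $R_{\br,\bs}^{x,\bn}$ is essentially a multivariate $q$-Bessel function as defined by \eqref{eq:def multivariable Jnu},'' and you have simply written out the substitution, the telescoping of the $(-q)$-exponent, and the matching of boundary conventions in full detail.
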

Note that this corollary and Proposition \ref{prop:properties R multivariate} together give a representation theoretic proof of Theorem \ref{thm:properties multivariable J}\\

Our next goal is to prove a summation identity for the multivariate $q$-Bessel functions. Let us first mention that by interpreting a binary tree as a product of Clebsch-Gordan coefficients, the $3nj$-symbols $R_{\br,\bs}^{x,\bn}$ satisfy, by definition, the formula
\begin{equation} \label{eq:C=RC}
C_{x,\br,\bn} = \sum_{\bs} R_{\br,\bs}^{x,\bn} C_{x,\hat \bs,\hat \bn}, \qquad C_{x,\br,\bn} = \prod_{j=1}^{k+1} C_{r_{j-1},n_j,r_j},
\end{equation}
where $r_0=x, r_{k+1}=n_{k+2}, s_0=n_1, s_{k+2}=x$. The functions $C_{x,\br,\bn}$ can be considered as multivariate Wall polynomials, which are $q$-analogs of Laguerre polynomials. In this light \eqref{eq:C=RC} is a multivariate $q$-analog of an identity proved by Erd\'elyi \cite{Er} which states that the Hankel transform maps a product of two Laguerre polynomials to a product of two Laguerre polynomials.\\

For the $3nj$-symbols $R_{\br,\bs}^{x,\bn}$ there exists a multivariate analog of the Biedenharn-Elliott identity. In terms of $q$-Bessel functions this gives an expansion formula for $k$-variable $q$-Bessel functions in terms of $(k-1)$-variable $q$-Bessel functions. The identity requires also another $3nj$-symbol. For $\br,\bs \in \Z^k$, $\bn \in \Z^{k+2}$, $x \in \Z$, let $S_{\br,\bs}^{x,\bn}$ be the coupling coefficient defined by
\begin{equation} \label{eq:defS}
\begin{tikzpicture}
[baseline={([yshift=-.5ex]current bounding box.center)}]
\node[Solid](x){}
    child{
        child{node[Solid](n1){}}
        child[missing]
        }
    child{node[Solid](r){}
        child{node[Solid]{}}
        child{node[Solid](n3){}}
        };
\node[above] at (x) {$x$};
\node[left] at (r) {$\br$};
\draw [thick,decorate,decoration={brace,mirror,amplitude=5pt,raise=4pt}] (n1) -- (n3)
        node[midway,below=8pt]{$\bn$};
\end{tikzpicture}
= \sum_{\hat \bs} S_{\br,\bs}^{x,\bn} \
\begin{tikzpicture}
[baseline={([yshift=-.5ex]current bounding box.center)}]
\node[Solid](x){}
    child{node[Solid](s){}
        child{node[Solid](n1){}}
        child{node[Solid]{}}
        }
    child{
        child[missing]
        child{node[Solid](n3){}}
        };
\node[above] at (x) {$x$};
\node[left] at (s) {$\bs$};
\draw [thick,decorate,decoration={brace,mirror,amplitude=5pt,raise=4pt}] (n1) -- (n3)
        node[midway,below=8pt]{$\bn$};
\end{tikzpicture}
\end{equation}
Note that $\sum_{\hat s} = \sum_{s_1}\cdots \sum_{s_k}$. This $3nj$-symbol can of course also be considered as a multivariate $q$-Bessel function (see the following result), but it lacks the self-duality property.
Let us first express $S$ in terms of the $6j$-symbols.
\begin{lemma} \label{lem:S=product R}
$S_{\br,\bs}^{x,\bn}$ is given by
\[
S_{\br,\bs}^{x,\bn} = \prod_{j=1}^{k} R_{r_{j},s_{j}}^{s_{j+1},n_1,r_{j-1},n_{j+2}},
\]
with $s_{k+1}=x$ and $r_0=n_2$.
\end{lemma}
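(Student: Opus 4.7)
The plan is to use the binary tree interpretation and induct on $k$. The key point is that a single F-move (a local tree rotation on a three-leaf subtree) is captured by exactly one $6j$-symbol $R_{r,s}^{y,a,b,c}$, whose value is independent of the root label $y$ by Proposition \ref{prop:properties R}(iii); consequently, a finite sequence of F-moves from one tree to another yields the \emph{product} of the individual $R$-symbols, with no summation required because each intermediate tree carries specific labels. I would first make the $S$-source explicit. Because the bold label $\br$ hangs on the left of the chain node in \eqref{eq:defS}, the $S$-source is the tree with root $x$, leftmost leaf $n_1$, and a chain $r_k, r_{k-1}, \ldots, r_1$ descending via \emph{left} children in the right subtree; the leaves $n_{k+2}, n_{k+1}, \ldots, n_3$ sit as the right children of $r_k, r_{k-1}, \ldots, r_2$ respectively, and $r_1$ has leaves $n_2, n_3$ as its children. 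The $S$-target is the same left-comb as the $R$-target.

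For the base case $k=1$, the $S$-source and $S$-target are literally the right- and left-comb of a single $6j$-symbol, and with the conventions $s_2=x$, $r_0=n_2$ the formula reduces to $S_{r_1,s_1}^{x,\bn}=R_{r_1,s_1}^{x,n_1,n_2,n_3}$, which is the definition. For the inductive step, assume the formula at level $k-1$ and apply an F-move at the top of the $S$-source: this is the subtree rooted at $x$ with external lines $n_1$, the subtree rooted at $r_{k-1}$, and $n_{k+2}$, coupled in right-comb form with inner label $r_k$. The move replaces $r_k$ by $s_k$ and contributes the factor $R_{r_k,s_k}^{x,n_1,r_{k-1},n_{k+2}}$. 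After this move, $x$ has children $s_k$ and $n_{k+2}$, while the subtree at $s_k$ has $n_1$ on the left and the original $r_{k-1}$-subtree on the right. Crucially, this residual subtree is \emph{exactly} a $(k-1)$-level $S$-source with root $s_k$, chain labels $r_1,\ldots,r_{k-1}$ and leaves $n_1,\ldots,n_{k+1}$. By the induction hypothesis its transformation into the corresponding left-comb is $\prod_{j=1}^{k-1} R_{r_j,s_j}^{s_{j+1},n_1,r_{j-1},n_{j+2}}$; composing with the top F-move yields the claimed product.

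The main obstacle is the careful bookkeeping of the tree structures. One has to pin down that the $S$-source has $n_1$ always as the far-left leaf, the chain running down the right subtree with $r$'s descending via \emph{left} children, and the leaf $n_{j+2}$ sitting as the right child at the $r_j$-level; only under this precise shape does the self-similarity required for the induction go through. Once this nested structure is correctly identified and verified, all other ingredients (the F-move $\leftrightarrow$ $6j$-symbol identification, independence of the root label, and composition of matrix elements along a fixed sequence of trees) are already in place from the preceding material, and the identity follows.
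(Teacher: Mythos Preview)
Your proof is correct and follows essentially the same route as the paper: perform an F-move at the root (producing the factor $R_{r_k,s_k}^{x,n_1,r_{k-1},n_{k+2}}$), observe that the resulting subtree at $s_k$ is precisely a $(k{-}1)$-level $S$-source with leaves $n_1,\ldots,n_{k+1}$ and chain $r_1,\ldots,r_{k-1}$, and then iterate; the paper writes this out as a loop over $j=0,\ldots,k-1$ rather than as an induction, but the content is identical. Two small remarks: in your description of the $S$-source the list ``$n_{k+2},\ldots,n_3$ as right children of $r_k,\ldots,r_2$'' has a count mismatch (it should read $r_k,\ldots,r_1$, consistent with your next clause that $r_1$ carries $n_2,n_3$); and the independence of $R$ on the root label is not actually needed here, since at each step the root of the relevant subtree is exactly the $s_{j+1}$ appearing in the stated formula.
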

\begin{proof}
We use the transition
\[
\begin{tikzpicture}
[baseline={([yshift=-.5ex]current bounding box.center)}]
\node[Solid](x){}
    child{
        child{node[Solid](n1){}}
        child[missing]
        }
    child{node[Solid](r){}
        child{node[Solid](n2){}}
        child{node[Solid](n3){}}
        };
\node[above] at (x) {$s_{k-j+1}$};
\node[right] at (r) {$r_{k-j}$};

\node[below] at (n1) {$n_1$};
\node[below] at (n2) {$\br_{j}$};
\node[below,xshift=.3cm] at (n3) {$n_{k-j+2}$};

\draw [->,thick] (2,-.8) -- ++(0:3.5)
    node[draw=none,fill=none,rectangle,above,midway]
    {$R_{r_{k-j},s_{k-j}}^{s_{k-j+1},n_1,r_{k-j-1},n_{k-j+2}}$};

\node[Solid](x) at (7,0) {}
    child{node[Solid](s) {}
        child{node[Solid](n1){}}
        child{node[Solid](n2){}}
        }
    child{
        child[missing]
        child{node[Solid](n3){}}
        };
\node[above] at (x) {$s_{k-j+1}$};
\node[left] at (s) {$s_{k-j}$};

\node[below] at (n1) {$n_1$};
\node[below] at (n2) {$\br_{j}$};
\node[below,xshift=.3cm] at (n3) {$n_{k-j+2}$};

\end{tikzpicture}
\qquad \text{where} \qquad
\begin{tikzpicture}
[baseline={([yshift=.5ex]current bounding box.center)}]
\node[Solid] (r){};
\node[below] at (r) {$\br_{j}$};
\end{tikzpicture}
=
\begin{tikzpicture}
[baseline={([yshift=-.5ex]current bounding box.center)}]
\node[Solid](x){}
    child{node[Solid](r'){}
        child{node[Solid](n1){}}
        child{node[Solid]{}}
        }
    child{
        child[missing]
        child{node[Solid](n3){}}
        };
\node[above] at (x) {$r_{k-j-1}$};
\node[left] at (r') {$\br_{j}'$};
\draw [thick,decorate,decoration={brace,mirror,amplitude=5pt,raise=4pt}] (n1) -- (n3)
        node[midway,below=8pt]{$\bn_j$};
\end{tikzpicture}
\]
and where $\br_{j}'=(r_1,\ldots,r_{k-j-2})$ and $\bn_j= (n_2,\ldots,n_{k-j+1})$. We set $s_{k+1}=x$ and $r_0=n_2$, then applying this transition successively on subtrees for $j=0,\ldots,k-1$ gives
\[
S_{\br,\bs}^{x,\bn} = \prod_{j=0}^{k-1} R_{r_{k-j},s_{k-j}}^{s_{k-j+1},n_1,r_{k-j-1},n_{k-j+2}}.
\]
Changing the index gives the stated expression for the coupling coefficient $S$.
\end{proof}

The following identity is the multivariate analog of the Biedenharn-Elliott identity from Theorem \ref{thm:identitiesJ}, i.e., the $k=2$ case gives back the Theorem \ref{thm:identitiesJ}(ii).
\begin{thm} \label{thm:multivariate BHid}
For $k \in \N_{\geq 2}$ let $\br,\bs \in \Z^k$ and $\bn \in \Z^{k+2}$, then
\[
R_{\br,\bs}^{x,\bn} = \sum_{\bt \in \Z^{k-1}} S_{(\bt,r_1),\bs}^{x,\bn} R_{\br',\bt}^{r_1,\bn'},
\]
where $\bv'$ is obtained from $\bv$ by leaving out the first component. In terms of multivariate $q$-Bessel functions,
\[
J_{\bnu(x,\bn)}(\br,\bs) = \sum_{\bt \in \Z^{k-1}} A_{\bt,\bs}^{r_1} J_{\bnu(r_1,\bn')}(\br',\bt)
\]
with
\[
A_{\bt,\bs}^{r_1} = (-q^\frac12)^{|\bt|+|\bs|-|\bn|-(k-2)n_1-s_k+r_2} \prod_{j=1}^k J_{s_{j+1}-n_1+t_{j-1}+n_{j+2}}(q^{s_j+t_j-n_1-n_{j+2}};q), \qquad t_k=r_1.
\]
\end{thm}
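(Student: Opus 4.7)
I would begin by establishing the first identity through a two-stage diagram chase. In the right-branching tree defining $R_{\br,\bs}^{x,\bn}$, the right subtree rooted at the node labelled $r_1$ is itself a $(k+1)$-leaf right-branching tree with top label $r_1$ and leaves $\bn'=(n_2,\ldots,n_{k+2})$. Applying the $(k-1)$-variable coupling $R_{\br',\bt}^{r_1,\bn'}$ to this subtree converts it into left-branching form, with new intermediate labels $\bt=(t_1,\ldots,t_{k-1})$, while leaving the initial right branching at the root $x$ untouched. The resulting mixed tree matches exactly the left-hand side of \eqref{eq:defS}: one right branching at $x$ separating $n_1$ from $r_1$, followed by purely left-branching structure descending from $r_1$ whose internal labels read bottom up are $(t_1,\ldots,t_{k-1},r_1)$. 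By definition, applying $S_{(\bt,r_1),\bs}^{x,\bn}$ to this mixed tree then takes us to the fully left-branching tree labelled by $\bs$. Composing the two transitions and summing over $\bt$ yields the first identity.

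\textbf{Step 2 (Conversion to $q$-Bessel functions).} Next I would rewrite every coupling coefficient using the explicit formulas already at hand. The Corollary preceding the theorem identifies $R_{\br,\bs}^{x,\bn}$ with $(-q)^{r_1+s_k-n_1-n_{k+2}}\,J_{\bnu(x,\bn)}(\br,\bs;q^2)$, and analogously for $R_{\br',\bt}^{r_1,\bn'}$. Meanwhile Lemma \ref{lem:S=product R} expands $S_{(\bt,r_1),\bs}^{x,\bn}$ into a product of $k$ ordinary $6j$-symbols, each of which Proposition \ref{prop:R=qBessel} converts into a factor of the form $(-q)^{\cdot}J_\nu(q^{2\cdot};q^2)$. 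Substituting all of these into the identity from Step~1 and then performing the rescaling $q\mapsto q^{1/2}$, under which $J(\cdot;q^2)$ becomes $J(\cdot;q)$ and $(-q)^a$ becomes $(-q^{1/2})^a$, should produce the stated $q$-Bessel identity: the $k$ one-variable $J$-factors in $A_{\bt,\bs}^{r_1}$ come directly from the $k$ factors supplied by Lemma \ref{lem:S=product R}, while the remaining scalar monomial is what is left of the combined $(-q)^{\cdot}$ prefactors after division by the prefactor of the left-hand $R$.

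\textbf{Anticipated obstacle.} The tree chase itself is immediate once one correctly interprets the composite label $(\bt,r_1)$ appearing in $S$, and the product-of-$J$'s part of $A_{\bt,\bs}^{r_1}$ drops out of Lemma \ref{lem:S=product R} automatically. The only technically delicate point is the bookkeeping of the scalar prefactor: one has to sum the exponents $r_j+s_j-n_1-n_{j+2}$ for $j=1,\ldots,k$ (with the boundary conventions $r_0=n_2$ coming from Lemma \ref{lem:S=product R}, $r_k=r_1$ and $s_{k+1}=x$) arising from the $k$ factors of $S$, add the exponent coming from $R_{\br',\bt}^{r_1,\bn'}$, subtract the exponent $r_1+s_k-n_1-n_{k+2}$ of $R_{\br,\bs}^{x,\bn}$, and verify that the total linear form in $\bt,\bs,\br,\bn$ collapses to the expression $|\bt|+|\bs|-|\bn|-(k-2)n_1-s_k+r_2$ claimed in the statement. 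This is a routine but fiddly calculation in linear arithmetic of the indices; no new structural ingredient beyond what has already been developed is required.
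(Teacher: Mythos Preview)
Your proposal is correct and follows essentially the same route as the paper: the paper's proof is precisely the two-stage tree transition you describe in Step~1 (first apply $R_{\br',\bt}^{r_1,\bn'}$ to the subtree rooted at $r_1$, then recognise the result as the source tree of $S_{(\bt,r_1),\bs}^{x,\bn}$), and it leaves the $q$-Bessel translation and the exponent bookkeeping of your Step~2 entirely to the reader. Your write-up is in fact more explicit than the paper's, which simply displays the tree diagram and invokes the definition of $R$; one small slip is that in your boundary conventions you should write $p_k=r_1$ (equivalently $t_k=r_1$) rather than ``$r_k=r_1$'', since the first slot of $S$ is $\bp=(\bt,r_1)$, not $\br$.
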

\begin{proof}
This follows from the transition
\[
\begin{tikzpicture}
[baseline={([yshift=-.5ex]current bounding box.center)}]
\node[Solid](x){}
    child{
        child{node[Solid](n1){}}
        child[missing]
        }
    child{node[Solid](r){}
        child{node[Solid]{}}
        child{node[Solid](n3){}}
        };
\node[above] at (x) {$x$};
\node[right] at (r) {$\br$};
\draw [thick,decorate,decoration={brace,mirror,amplitude=5pt,raise=4pt}] (n1) -- (n3)
        node[midway,below=8pt]{$\bn$};


\node at (1.25,-.75) {$=$};


\node[Solid](x) at (2.5,.25){}
    child{
        child{
            child{node[Solid](n1){}}
            child[missing]
            }
        child[missing]
        }
    child{node[Solid](r1){}
        child{
            child{node[Solid](n2){}}
            child[missing]
        }
        child{node[Solid](r2){}
            child[thin]{node[Solid](n3){}}
            child[thin]{node[Solid](n4){}}
            }
        };
\node[above] at (x) {$x$};
\node[right] at (r2) {$\br'$};
\node[right] at (r1) {$r_1$};
\node[below] at (n1) {$n_1$};
\draw [thick,decorate,decoration={brace,mirror,amplitude=3pt,raise=4pt}] (n2) -- (n4)
        node[midway,below=8pt]{$\bn'$};


\draw [->,thick] (4,-.75) -- ++(0:1.25)
    node[draw=none,fill=none,rectangle,above,midway]{$R_{\br',\bt}^{r_1,\bn'}$};


\node[Solid](x) at (6.5,.25){}
    child{
        child{
            child{node[Solid](n1){}}
            child[missing]
            }
        child[missing]
        }
    child{node[Solid](r1){}
        child{node[Solid](p){}
            child{node[Solid](n2){}}
            child{node[Solid]{}}
            }
        child{
            child[missing]
            child[thin]{node[Solid](n4){}}
            }
        };
\node[above] at (x) {$x$};
\node[right] at (r2) {$\br'$};
\node[right] at (r1) {$r_1$};
\node[left] at (p) {$\bt$};
\node[below] at (n1) {$n_1$};
\draw [thick,decorate,decoration={brace,mirror,amplitude=3pt,raise=4pt}] (n2) -- (n4)
        node[midway,below=8pt]{$\bn'$};


\node at (7.75,-.75) {$=$};


\node[Solid](x) at (8.75,0){}
    child{
        child{node[Solid](n1){}}
        child[missing]
        }
    child{node[Solid](p){}
        child{node[Solid]{}}
        child{node[Solid](n3){}}
        };
\node[above] at (x) {$x$};
\node[left] at (p) {$\bp$};
\draw [thick,decorate,decoration={brace,mirror,amplitude=5pt,raise=4pt}] (n1) -- (n3)
        node[midway,below=8pt]{$\bn$};


\draw [->,thick] (9.75,-.75) -- ++(0:1.25)
    node[draw=none,fill=none,rectangle,above,midway]{$S_{\bp,\bs}^{x,\bn}$};


\node[Solid](x) at (12,0){}
    child{node[Solid](s){}
        child{node[Solid](n1){}}
        child{node[Solid]{}}
        }
    child{
        child[missing]
        child{node[Solid](n3){}}
        };
\node[above] at (x) {$x$};
\node[left] at (s) {$\bs$};
\draw [thick,decorate,decoration={brace,mirror,amplitude=5pt,raise=4pt}] (n1) -- (n3)
        node[midway,below=8pt]{$\bn$};

\end{tikzpicture}
\]
where $\bp= (\bt,r_1)$, and the definition of the coupling coefficients $R$.
\end{proof}

\begin{remark}
It seems that there are no analogs for the $3nj$-symbols $R$ of identities (i) and (iii) of Theorem \ref{thm:identitiesJ}, but there does exist an analog of Theorem \ref{thm:identitiesJ}(i) involving only the $3nj$-symbols $S$ which may be of interest. This is obtained as follows.

Let $\bn \in \Z^{k+2}$. For $j \in \{1,2,\ldots,k+1\}$ we define $\bn_j = (n_{k+3-j},\ldots,n_{k+2},n_1,\ldots,n_{k+2-j})$. Furthermore, given a vector $\bv$, we denote (as in Theorem \ref{thm:multivariate BHid}) by $\bv'$ the vector $\bv$ without the first component, and we set $\bn_j'= (\bn_j)'$. Consider the transition
\[
\begin{tikzpicture}
[baseline={([yshift=-.5ex]current bounding box.center)}]
\node[Solid](x){}
    child{
        child{node[Solid](n1){}}
        child[missing]
        }
    child{node[Solid](r){}
        child{node[Solid](n2){}}
        child{node[Solid](n3){}}
        };
\node[above] at (x) {$x$};
\node[right] at (r) {$\hat \br$};
\draw [thick,decorate,decoration={brace,mirror,amplitude=5pt,raise=4pt}] (n1) -- (n3)
        node[midway,below=8pt]{$\hat \bn$};


\node at (1.25,-.75) {$=$};


\node[Solid](x) at (2.5,0){}
    child{
        child{node[Solid](n1){}}
        child[missing]
        }
    child{node[Solid](r){}
        child{node[Solid](n2){}}
        child{node[Solid](n3){}}
        };
\node[above] at (x) {$x$};
\node[left] at (r) {$\br$};
\node[below] at (n1) {$n_{k+2}$};
\draw [thick,decorate,decoration={brace,mirror,amplitude=3pt,raise=4pt}] (n2) -- (n3)
        node[midway,below=8pt]{$\bn_1'$};


\draw [->,thick] (4,-.75) -- ++(0:1.25)
    node[draw=none,fill=none,rectangle,above,midway]{$S_{\br,\bs_1}^{x,\bn_1}$};

\node[Solid](x) at (6.5,0){}
    child{node[Solid](s){}
        child{node[Solid](n1){}}
        child{node[Solid](n2){}}
        }
    child{
        child[missing]
        child{node[Solid](n3){}}
        };
\node[above] at (x) {$x$};
\node[left] at (s) {$\bs_1$};
\node[below] at (n1) {$n_{k+2}$};
\draw [thick,decorate,decoration={brace,mirror,amplitude=3pt,raise=4pt}] (n2) -- (n3)
        node[midway,below=8pt]{$\bn_1'$};


\node at (7.75,-.75) {$=$};


\node[Solid](x) at (9,0){}
    child{
        child{node[Solid](n1){}}
        child[missing]
        }
    child{node[Solid](r){}
        child{node[Solid](n2){}}
        child{node[Solid](n3){}}
        };
\node[above] at (x) {$x$};
\node[right] at (r) {$\hat \bs_1$};
\draw [thick,decorate,decoration={brace,mirror,amplitude=5pt,raise=4pt}] (n1) -- (n3)
        node[midway,below=8pt]{$\widehat {\bn_1}$};
\end{tikzpicture}
\]
Iterating this transition $k+1$ times shows that the coupling coefficient in the transition
\[
\begin{tikzpicture}
[baseline={([yshift=-.5ex]current bounding box.center)}]
\node[Solid](x){}
    child{
        child{node[Solid](n1){}}
        child[missing]
        }
    child{node[Solid](r){}
        child{node[Solid](n2){}}
        child{node[Solid](n3){}}
        };
\node[above] at (x) {$x$};
\node[right] at (r) {$\hat \br$};
\draw [thick,decorate,decoration={brace,mirror,amplitude=5pt,raise=4pt}] (n1) -- (n3)
        node[midway,below=8pt]{$\hat \bn$};


\draw [->,thick] (1.5,-.75) -- ++(0:1.25)
    node[draw=none,fill=none,rectangle,above,midway]{$T_{\br,\bs}^{x,\bn}$};


\node[Solid](x) at (4,0){}
    child{
        child{node[Solid](n1){}}
        child[missing]
        }
    child{node[Solid](r){}
        child{node[Solid](n2){}}
        child{node[Solid](n3){}}
        };
\node[above] at (x) {$x$};
\node[right] at (r) {$\hat \bs$};
\draw [thick,decorate,decoration={brace,mirror,amplitude=5pt,raise=4pt}] (n1) -- (n3)
        node[midway,below=8pt]{$\widehat {\bn_{k+1}}$};


\node at (5.5,-.75) {$=$};


\node[Solid](x) at (6.75,0){}
    child{
        child{node[Solid](n1){}}
        child[missing]
        }
    child{node[Solid](r){}
        child{node[Solid](n2){}}
        child{node[Solid](n3){}}
        };
\node[above] at (x) {$x$};
\node[left] at (r) {$\bs$};
\node[below] at (n1) {$n_{1}$};
\draw [thick,decorate,decoration={brace,mirror,amplitude=3pt,raise=4pt}] (n2) -- (n3)
        node[midway,below=8pt]{$\bn'$};
\end{tikzpicture}
\]
is given by
\[
T_{\br,\bs}^{x,\bn} = \sum_{\bs_{k} } \cdots \sum_{\bs_1} \left(S_{\bs_0,\bs_1}^{x,\bn_1} \cdots S_{\bs_{k},\bs_{k+1}}^{x,\bn_{k+1}}\right), \qquad \bs_0 = \br, \quad \bs_{k+1} = \bs.
\]
On the other hand, by the definition of the coupling coefficient $S$ we have
\[
\begin{tikzpicture}
[baseline={([yshift=-.5ex]current bounding box.center)}]
\node[Solid](x){}
    child{
        child{node[Solid](n1){}}
        child[missing]
        }
    child{node[Solid](r){}
        child{node[Solid](n2){}}
        child{node[Solid](n3){}}
        };
\node[above] at (x) {$x$};
\node[right] at (r) {$\hat \br$};
\draw [thick,decorate,decoration={brace,mirror,amplitude=5pt,raise=4pt}] (n1) -- (n3)
        node[midway,below=8pt]{$\hat \bn$};


\node at (1.25,-.75) {$=$};

\node[Solid](x) at (2.5,0){}
    child{node[Solid](s){}
        child{node[Solid](n1){}}
        child{node[Solid](n2){}}
        }
    child{
        child[missing]
        child{node[Solid](n3){}}
        };
\node[above] at (x) {$x$};
\node[left] at (s) {$\br$};
\draw [thick,decorate,decoration={brace,mirror,amplitude=5pt,raise=4pt}] (n1) -- (n3)
        node[midway,below=8pt]{$\bn$};


\draw [->,thick] (4,-.75) -- ++(0:1.25)
    node[draw=none,fill=none,rectangle,above,midway]{$S_{\bs,\br}^{x,\bn}$};


\node[Solid](x) at (6.5,0){}
    child{
        child{node[Solid](n1){}}
        child[missing]
        }
    child{node[Solid](r){}
        child{node[Solid](n2){}}
        child{node[Solid](n3){}}
        };
\node[above] at (x) {$x$};
\node[left] at (r) {$\bs$};
\draw [thick,decorate,decoration={brace,mirror,amplitude=5pt,raise=4pt}] (n1) -- (n3)
        node[midway,below=8pt]{$\bn$};


\node at (7.75,-.75) {$=$};


\node[Solid](x) at (9,0){}
    child{
        child{node[Solid](n1){}}
        child[missing]
        }
    child{node[Solid](r){}
        child{node[Solid](n2){}}
        child{node[Solid](n3){}}
        };
\node[above] at (x) {$x$};
\node[left] at (r) {$\bs$};
\node[below] at (n1) {$n_{1}$};
\draw [thick,decorate,decoration={brace,mirror,amplitude=3pt,raise=4pt}] (n2) -- (n3)
        node[midway,below=8pt]{$\bn'$};
\end{tikzpicture}.
\]
so that
\[
S_{\bs,\br}^{x,\bn} = \sum_{\bs_{k} } \cdots \sum_{\bs_1} \left(S_{\bs_0,\bs_1}^{x,\bn_1} \cdots S_{\bs_{k},\bs_{k+1}}^{x,\bn_{k+1}}\right), \qquad \bs_0 = \br, \quad \bs_{k+1} = \bs.
\]
For $k=1$ this gives back Theorem \ref{thm:identitiesJ}(i).
\end{remark}

\end{document}